\newtheorem{theorem}{Theorem}[section]
\newtheorem{lemma}[theorem]{Lemma}
\newtheorem{proposition}[theorem]{Proposition}
\newtheorem{corollary}[theorem]{Corollary}
\newtheorem{observation}[theorem]{Observation}
\theoremstyle{definition}
\newtheorem{example}[theorem]{Example}
\newtheorem{claim}[theorem]{Claim}
\theoremstyle{remark}
\newtheorem{remark}[theorem]{Remark}
\numberwithin{equation}{section}
\begin{document}

\title[Extension of definable Lipschitz maps]{Extension of Lipschitz maps  \\
       definable in Hensel minimal structures}

\author[Krzysztof Jan Nowak]{Krzysztof Jan Nowak}


\subjclass[2000]{Primary 03C65, 12J25, 51F30; Secondary 32B20, 32P05, 03C98.}

\keywords{Non-Archimedean geometry; Hensel minimal structures; Lipschitz extension; cell decomposition; open cell package; skeleton; risometries; Jacobian property}

\date{}

\begin{abstract}
In this paper, we establish a theorem on extension of Lipschitz maps $f$ definable in Hensel minimal fields $K$. This may be regarded as a definable, non-Archimedean, non-locally compact version of Kirszbraun's extension theorem. We shall proceed with double induction with respect to the dimensions of the ambient space and of the domain of $f$. To this end, we introduce the concept of a definable open cell package with a skeleton which, along with the concept of a risometry, plays a key role in our induction procedure.
\end{abstract}

\maketitle


\section{Introduction}


We are concerned with a 1-h-minimal, non-trivially valued field $K$ of equicharacteristic zero. In other words, $K$ is a model of a 1-h-minimal theory $T$ in an expansion $\mathcal{L}$ of the pure valued field language $(K,0,1,+,-,\cdot,\mathcal{O}_{K})$.
The axiomatically based theory of Hensel minimal structures was recently introduced by Cluckers--Halupczok--Rideau~\cite{C-H-R}.

\vspace{1ex}

Throughout the paper, we shall adopt multiplicative convention for valuation $| \cdot |: K^{\times} \to |K^{\times}|$, where $K^{\times} := K \setminus \{ 0 \}$.

\vspace{1ex}

A Lipschitz continuous map $f: A \to K^m$, $A \subset K^n$, with Lipschitz constant $\epsilon \in |K|$, $\epsilon \neq 0$, shall be called an $\epsilon$-Lipschitz map.

\vspace{1ex}

The main aim of this paper is to establish a theorem on extension of Lipschitz definable maps from a subset $A$ of $K^{n}$ to an affine space $K^{m}$ (Theorem~\ref{ext}), which may be regarded as a definable, non-Archimedean, non-locally compact version of Kirszbraun's extension theorem~\cite{Kir}.

\vspace{1ex}

The extention of Lipschitz continuous functions $f: A \to \mathbb{R}$ from a subset $A$ of $\mathbb{R}^{n}$ with the same constant, say $1$, goes back to McShane and
Whitney. The more difficult and delicate case of maps with values in $\mathbb{R}^{k}$ was achieved by Kirszbraun~\cite{Kir}.

\vspace{1ex}

In non-Archimedean geometry, the following version of Kirsz\-braun's theorem was established (using Zorn's lemma) by Bhaskaran~\cite{Bas}:  

\vspace{1ex}

\begin{em}
Let $K$ be a rank one valued field, $X$ be an ultrametric space, and $A \subset X$. Then every bounded Lipschitz function $f:A \to K$ extends to a bounded Lipschitz function $F:X \to K$, with the Lipschitz constant and supremum norm preserved, whenever the field $K$ or the subspace $A$ is spherically complete.
\end{em}

\vspace{1ex}

Moreover, he proved that the field $K$ is spherically complete if and only if the above extension theorem holds for every ultrametric space $X$ and every $A \subset X$. Therefore, it is not always possible to extend Lipschitz functions to the whole ultrametric space with the same Lipschitz constant.
Afterwards, the paper~\cite[Theorem~2.9]{BDHM} proves the existence of Lipschitz retractions, with an arbitrary Lipschitz constant $>1$, from an ultrametric space $X$ on an arbitrary closed subset $A$ of $X$. Hence follows Lipshitz extension with an arbitrarily small magnification of the Lipschitz constant.

\vspace{1ex}

Consider a 1-Lipschitz retraction $r:K^{n} \to A$ on a closed subset $A$ of $K^{n}$. It is easy to check that then the distance from $A$ is realized:
\begin{equation}\label{dist}
 |x-r(x)| = \inf \, \{ |x-a|: \ a \in A \}, \ \ x \in K^{n}.
\end{equation}
Whenever the valuation is non-discrete of rank one, there exist closed (and definable too) subsets $A \subset K$ which are not 1-Lipschitz retracts of the line $K$. Furthermore, if the valuation is of rank greater than 1, there exist closed (and definable too) subsets $A \subset K$ which are not Lipschitz retracts of $K$ with an arbitrary Lipschitz constant $>1$.
This is demonstrated by the following

\begin{example}\label{ex-ret}
Suppose that the multiplicative group $|K^{\times}|$ is densely ordered of rank one, or is of rank greater than 1. In the former case (for instance, for $K= \mathbb{R}((t))$ or $K= \mathbb{C}((t))$), let $\rho \in |K^{\times}|$, $\rho >0$. In the latter case, let $G$ be a proper (definable) convex subgroup of $|K^{\times}|$. Define the subsets
$$ A := \{ x \in K: \ |x|> \rho \} \ \ \text{and} \ \  B:= \{ x \in K: \ |x| > G \}, $$
depending on the case in question. By equality~\ref{dist}, the set $A$ is not a 1-Lipschitz retract of $K$, and $B$ is not a Lipschitz retract of $K$ with any Lipschitz constant $\epsilon >1$ from $G$.
\end{example}

\vspace{1ex}

Aschenbrenner--Fischer~\cite{Asch} achieved a definable, real version of Kirsz\-braun's theorem for definably complete expansions of ordered
fields.

\vspace{1ex}

To our best knowledge, the only definable, non-Archimedian version of Kirszbraun's theorem was achieved by Cluckers--Martin~\cite{C-M} in the $p$-adic, thus locally compact case. And more precisely, for functions which are semi-algebraic, subanalytic or definable in an analytic structure on a finite extension of the field $\mathbb{Q}_{p}$ of $p$-adic numbers, for whose theory we refer to the papers~\cite{De-Dries,C-Lip-0}). They prove this theorem and the
existence of a definable 1-Lipschitz retraction on any closed definable subset $A$ of
$\mathbb{Q}_{p}^{n}$, proceeding with simultaneous induction on
the dimension $n$ of the ambient space. Their construction of definable retractions makes use
of some definable Skolem functions.

\vspace{1ex}

Also, they posed the question (see ibid.\ Remark~3) whether their $p$-adic version of Kirsz\-braun's theorem and the existence of Lipschitz retractions on $p$-adic closed definable subsets hold in some form for other classes of valued fields; with natural examples $\mathbb{R}((t))$ and $\mathbb{C}((t))$. And they indicated that some difficulties in more general settings are the absence of definable Skolem functions in general and infiniteness of the residue field. The easier case of Lipschitz extension of definable $p$-adic functions on the affine line $\mathbb{Q}_{p}$ was treated in \cite{Kui}.

\vspace{1ex}

The main aim of this paper is to establish the theorem on extending definable 1-Lipschitz maps stated below, which distinguishes two cases depending on whether the subset of elements $>1$ in the value group $vK$ has the minimal element $\epsilon$ or not. For this purpose, we introduce the concept of a definable open cell package with a skeleton, which in a sense compensate for the lack of definable algebraic Skolem functions.

\begin{theorem}\label{ext}
Let $f: A \to K^m$ be a $0$-definable 1-Lipschitz map on a (possibly non-closed) subset $A \subset K^n$ of dimension $k$.

I. Suppose the value group $|K|$ is densely ordered, i.e.\ has no minimal element among the elements $>1$.
Then, $f$ extends to a $0$-definable $\epsilon$-Lipschitz map $F: K^n \to K^{m}$ for any $\epsilon \in |K|$, $\epsilon > 1$.


II.  Suppose the value group $|K|$ has the minimal element $\epsilon$ among the elements $>1$. Then $f$ extends to a $0$-definable $\epsilon^{\omega(k)}$-Lipschitz map $F: K^n \to K^{m}$, where $\omega(k) = 2^{k-2}$ if $k\geq 2$, and $\omega(1)=\omega(0)=0$.
\end{theorem}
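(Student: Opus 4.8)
\emph{Reductions.} The plan is first to reduce to $m=1$: a map $F=(F_1,\dots,F_m)\colon K^n\to K^m$ is $\epsilon$‑Lipschitz as soon as each $F_j$ is, because $K^m$ carries the maximum norm, and extending each component $f_j=\pi_j\circ f$ of $f$ separately yields, componentwise, a $0$‑definable extension of $f$ with the same Lipschitz constant. I would then run an induction on $k=\dim A$. A non‑closed $A$ should cost nothing extra: the frontier $\overline A\setminus A$ has dimension $<k$ by the dimension theory available in h‑minimal structures, so it is invisible to the inductive control on the Lipschitz constant.

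\emph{Base case $k=0$.} Here $A$ is a finite $0$‑definable subset of $K^n$, and I would extend $f$ by an ultrametric nearest‑point construction. For $x\in K^n$ let $A_x\subseteq A$ be the set of points of $A$ at minimal distance from $x$ — a set definable uniformly from $x$, hence $0$‑definable — and set $F(x)$ to be a suitably canonical point of the ball $\bigcap_{a\in A_x}B\bigl(f(a),\operatorname{diam}(A_x)\bigr)$; this intersection is nonempty (it contains $f(a)$ for each $a\in A_x$, since $f$ is $1$‑Lipschitz on $A$) and, being an intersection of ultrametric balls, is again a ball. A short case analysis using only the ultrametric inequality and the $1$‑Lipschitz hypothesis shows $F$ is $1$‑Lipschitz, which settles Case II for $k=0$ (where $\epsilon^{k}=1$) and a fortiori Case I.

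\emph{Inductive step.} For $k\ge 1$ I would invoke the $1$‑h‑minimal cell decomposition, together with the Jacobian property and the preparation/risometry machinery of Cluckers--Halupczok--Rideau (using stable embeddedness of $RV$ to handle the $RV$‑reparametrized pieces), to decompose $A$ into finitely many cells so that, on each cell and after composing with the risometry trivialising it, $f$ is brought into a controlled ``model'' form relative to the cell coordinates. Extending such a model $1$‑Lipschitz map across one of the $k$ free coordinate directions of the cell — filling one fibre direction, the fibres being finite Boolean combinations of points and balls — is a one‑variable ultrametric interpolation; iterating through the $k$ free directions and transporting the result back through the risometry accumulates, in the discretely valued case, at most a factor $\epsilon$ per direction, producing the bound $\epsilon^{k}$, while in the case where $vK$ has no least element above $1$ the tolerances at each step can be taken so small that the product stays below any prescribed $\epsilon>1$. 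Finally the cellwise extensions have to be glued into a single $0$‑definable map on $K^n$ by a canonical ``nearest‑cell'' retraction onto $\overline A$, the ultrametric inequality once more guaranteeing that gluing does not worsen the Lipschitz constant.

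\emph{Main obstacle.} I expect the principal difficulty to be precisely the interaction of this gluing with the failure of $\operatorname{acl}=\operatorname{dcl}$. None of the choices above — a nearest point of $A$, a representative of a $0$‑definable ball, a nearest cell — may be made arbitrarily: each must be realised by a genuinely canonical, parameter‑free construction, and the local extensions, which a priori disagree off their own cells, must be forced into mutual compatibility by the geometry of the decomposition rather than reconciled by hand. Tracking the resulting loss of the Lipschitz constant through the $k$ levels — showing it is no worse than $\epsilon^{k}$ in the discrete case and can be driven below any $\epsilon>1$ in the dense case — is the quantitative heart of the same problem.
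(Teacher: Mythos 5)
Your reductions (to $m=1$, then induction on dimension, with the frontier of a non-closed $A$ absorbed into lower-dimensional strata) and your list of tools match the paper's, and you correctly name the central obstacle: every choice must be made canonically because $\mathrm{acl}$ need not equal $\mathrm{dcl}$. But naming the obstacle is where your proposal stops, and overcoming it is essentially the entire content of the proof. Already in the base case, ``a suitably canonical point of the ball $\bigcap_{a\in A_x}B(f(a),\mathrm{diam}\,A_x)$'' does not exist for a general ball; the paper's way out is the arithmetic average $\sum f(A(x))/\sharp A(x)$, available only because the residue characteristic is zero, and in dimension $\ge 2$ the finite case is not a one-line nearest-point argument but a careful canonical extension through successive $\delta_i$-neighbourhoods. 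More importantly, for $k\ge 1$ the paper's key device is the \emph{skeleton} of a reparametrized cell: a canonical $0$-definable set of modified centers, built by grouping the finitely many centers into equivalence classes at successive distance levels and replacing each class by an arithmetic average or a distinguished member, so that the resulting centers satisfy a metric separation condition. Nothing in your sketch plays this role, yet it is what makes the image centers $d_\xi^i$ (obtained from the Jacobian property), the extension of $f$ to the codimension-one ``center loci,'' and the final extension by zero all definable without parameters and still Lipschitz.

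Two further steps of your plan would fail as stated. First, gluing cellwise extensions by a ``nearest-cell retraction onto $\overline A$'' presupposes a definable Lipschitz retraction, which is essentially what Cluckers--Martin construct in the $p$-adic case using definable Skolem functions, unavailable here; the paper instead glues via a Partition Lemma (extend $f|A_2$, subtract, then extend the difference, which vanishes on $A_2$, by an explicit ultrametric case analysis). Second, a single induction on $k$ treating ``one free direction at a time'' does not furnish the statement needed at each stage: the centers of the reparametrized cells are $RV$-parametrized families, so one must prove and apply a \emph{uniform} version of the theorem for families indexed by $RV^s$, and the actual scheme is a double induction in which a step of type $(n,k)$ calls on type $(k-1,k-1)$ (uniform, costing nothing) and on type $(n,k-2)$ (costing a factor $\epsilon^2$ in the discrete case). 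That is where the exponent $\epsilon^{k}$ comes from, not from ``a factor $\epsilon$ per direction''; your bookkeeping happens to land on the same exponent but is not supported by an argument.
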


Since the affine spaces $K^{m}$ are equipped with the maximum norm, it suffices to prove the theorem for the case $m=1$. 

\vspace{1ex}

The Lipschitz constant has been increased because the initial 1-Lipschitz function $f$ is replaced by a function $g$ with the condition:
\begin{equation}\label{risometry}
rv ( g(x_1, \ldots, x_{k-1},x_{k} + y_{k}, x_{k+1},\ldots,x_n) - g(x)) = rv (y_{k})
\end{equation}
for a single, distinguished variable $x_{k}$; most often it is the last variable of the package under consideration. In other words, $g$ is a risometry onto its image with respect to the variable $x_{k}$, the concept introduced by Halupczok~\cite{Hal}.

\begin{remark}\label{rem-risometry}
The key property of a risometry we use is that the image of an open ball is an open ball of the same radius. This immediately follows from~\cite[Lemma~2.33]{Hal} and the fact 0-h-minimality implies definable spherical completeness (cf.~\cite[Lemma~2.1.7]{C-H-R}).
\end{remark}


To get such a function $g$, take an element $\varepsilon \in K$ such that $|\varepsilon| = \epsilon$ and put
$$ g: A \to K, \ \ \ g(x) := \, x_{k} + 1/\varepsilon \cdot f(x). $$
Clearly, once we find a 0-definable 1-Lipschitz extension $G: K^n \to K$ of $g$, the function
$$ F(x) := \varepsilon ( G(x) - x_{k} ) $$
is an extension we are looking for. So from now on it will be assumed that a given function $f(x)$ satisfies a condition of type~\ref{risometry}.

\vspace{1ex}

Actually, for the sake of the proof of the extension theorem, we need the following stronger version, which is uniform with respect to parameters from the sort $RV$. We state it only for the case~I.

\begin{theorem}\label{ext-uni}
Consider a 0-definable family
$$ f_{\lambda}: A_{\lambda} \to K, \ \ \lambda \in \Lambda \subset (RV)^s, $$
of 1-Lipschitz functions on subsets of $K^n$. Then, for any $\epsilon \in |K|$, $\epsilon > 1$, there exists a 0-definable family
$$ F_{\lambda}: K^{n} \to K, \ \ \lambda \in \Lambda $$
of $\epsilon$-Lipschitz functions that extend the functions $f_{\lambda}$ for $\lambda \in \Lambda$.
\end{theorem}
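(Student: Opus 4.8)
The plan is to prove Theorems~\ref{ext} and~\ref{ext-uni} simultaneously by induction on the number $n$ of ambient variables, carrying the $RV$-uniformity along at every stage; this uniformity will be needed precisely because cell decomposition and the Jacobian property introduce fresh $RV$-parameters. By the Remark and the substitution $g(x)=\sum_{i\in I}x_i+\varepsilon^{-1}f(x)$ discussed above, we may assume $m=1$ and that the function to be extended is $0$-definable, $1$-Lipschitz, and satisfies a condition of type~\eqref{risometry} in the variables $x_i$, $i\in I$; it then suffices to produce a $0$-definable $1$-Lipschitz extension $F:K^n\to K$. The one structural simplification afforded by the non-Archimedean setting is that a map $K^n\to K$ is $1$-Lipschitz if and only if it is $1$-Lipschitz in each variable separately (by the ultrametric inequality, a telescoping estimate over the coordinates suffices), so the task reduces to extending the domain one coordinate at a time while not spoiling Lipschitz continuity in the coordinates already treated.

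First I would apply cell decomposition together with the Jacobian property to the family $(A_\lambda)$, decomposing each $A_\lambda$ into finitely many cells over $K^{n-1}$ on which $f_\lambda$, read in the last coordinate, is --- after subtracting the cell's centre --- a risometry onto its image; reparametrising cells only enlarges the index set $\Lambda\subseteq(RV)^s$, which is harmless. The heart of the matter is then the one-variable extension over a fixed base point $x'\in K^{n-1}$: the fibres over $x'$ of the cells making up $A_\lambda$ are $0$-definable subsets of $K$ on which $x_n\mapsto f_\lambda(x',x_n)$ is a $1$-Lipschitz risometry, and one must extend it to all of $K$ definably and uniformly. This is where the absence of definable Skolem functions (and the possible failure of $\mathrm{acl}=\mathrm{dcl}$) really bites: the naive nearest-point retraction used in the $p$-adic case of Cluckers--Martin is unavailable. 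I would use instead the skeleton of the fibre as a canonical, $RV$-parametrised set of reference points, and extend $f_\lambda$ across each maximal ball missing the fibre by the value prescribed by the risometry rule $rv\bigl(F_\lambda(x',x_n)-F_\lambda(x',a)\bigr)=rv(x_n-a)$ relative to an adjacent skeleton point $a$ --- checking consistency (different admissible reference points give the same value modulo the Lipschitz budget, by the risometry property) and the $1$-Lipschitz estimate via the ultrametric inequality. One must also verify that the resulting $\widetilde f_\lambda$ on $\pi(A_\lambda)\times K$ remains $1$-Lipschitz in the base variables $x'$ and still satisfies~\eqref{risometry}.

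It then remains to extend $\widetilde f_\lambda$ from the cylinder $\pi(A_\lambda)\times K$ to $K^{n-1}\times K$, i.e. in the first $n-1$ coordinates; here the inductive hypothesis in dimension $n-1$ is the right tool, but a naive application treats $x_n\in K$ as an extra parameter and so falls outside the $RV$-uniform statement --- which is exactly why the induction must be set up with care. The fix I would pursue is to invoke the $(n-1)$-dimensional case first, applied to the auxiliary data extracted from the cell decomposition (the boundary and centre maps of the cells, made Lipschitz after a further preparation step), so as to reduce to the situation $\pi(A_\lambda)=K^{n-1}$ \emph{before} performing the $x_n$-extension of the previous paragraph, whereupon the final step becomes vacuous. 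Throughout, stable embeddedness of $RV$ is what guarantees that all the auxiliary families, decompositions and skeleta may be chosen $0$-definably and uniformly in $\lambda$. I expect the genuinely delicate points to be (i) the skeleton-based one-variable extension, with its consistency and estimates, and (ii) the bookkeeping that keeps the induction on $n$ compatible with $RV$-uniformity --- arranging matters so that one never has to extend over a base that is a proper subset of $K^{n-1}$.
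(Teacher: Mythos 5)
Your overall toolkit (reduction to $m=1$ and to the risometry condition, cell decomposition with the Jacobian property, skeleta as a canonical substitute for definable choice, stable embeddedness of $RV$) matches the paper's, but the induction you set up does not close, and the point of failure is exactly the one you flag. After the fibrewise extension in $x_n$ you hold a function on the cylinder $\pi(A_\lambda)\times K$, and to pass to $K^{n-1}\times K$ you would need the $(n-1)$-dimensional statement uniformly in the parameter $x_n\in K$ --- a \emph{field} parameter, whereas the inductive statement is uniform only in $RV$-parameters. Your proposed repair, namely applying the $(n-1)$-dimensional case to the auxiliary data so as to reduce to $\pi(A_\lambda)=K^{n-1}$ first, does not achieve this: extending the centres of the base cells to global $1$-Lipschitz functions (which the paper does do, via Remark~\ref{rem1} and the uniform hypothesis in lower ambient dimension) extends the \emph{cell data}, not the function $f_\lambda$, whose domain still projects onto a proper subset $D\subsetneq K^{n-1}$; no mechanism is given for defining the extension over base points outside $D$. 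The paper's resolution is structurally different: a double induction on $(n,k)$ with $k=\dim A$. One first extends $f$ to the lower-dimensional set $d(C)$ of domains of the centres of the \emph{image} cells (Corollary~\ref{r2} and the valuative Jacobian property), invokes the induction hypothesis on $k$ to extend that restriction globally, subtracts, and is left with a function vanishing on $d(C)$; for such a function the extension by zero outside $C\cup d(C)$ (and by fibrewise averaging over $D$) is Lipschitz \emph{because} the skeleton's metric separation condition~\ref{skeleton} forces estimates of the form $|f(x)|\le|x_1-c_{\xi,1}|\le|x_1-y_1|$ across distinct cells. That estimate, not separate-variable Lipschitz continuity, is what makes the last step work, and it is absent from your outline.

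Two smaller points. First, the base of the induction on $k$ is the case of finite sets ($k=0$), which in $K^n$ already requires its own canonical construction (successive extension to $\delta_i$-neighbourhoods with averaging over nearest points); it cannot be subsumed under the one-variable-at-a-time scheme. Second, in your one-variable step, taking ``the value prescribed by the risometry rule relative to an adjacent skeleton point'' presupposes a definable choice of reference point, which is exactly what may fail when $\mathrm{acl}\neq\mathrm{dcl}$; the paper instead takes the average $\sum\tilde f(S(x))/\sharp S(x)$ over \emph{all} nearest skeleton points, which is canonical, hence $0$-definable, and is what allows the uniform version to follow by a compactness argument.
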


We shall prove the above extension theorem by double induction on the dimension $n$ of the ambient space $K^n$ and on the dimension $k$ of the subset $A \subset K^n$. Each induction step of type~$(n,k)$, with $n \geq 1$ and $1 \leq k \leq n$, requires the induction hypothesis of type~$(n,k-1)$ (ordinary version) and of type~$(k-1,k-1)$ (uniform version).

\vspace{1ex}

While the ordinary version of the extension theorem will be applied to a given 1-Lipschitz function $f$ restricted to some 0-definable subset in the affine space under consideration, the uniform one to some 0-definable family of 1-Lipschitz centers in order to extend them to global definable 1-Lipschitz ones. This, in turn, is used to obtain, via partitioning, open cell packages which satisfy a certain necessary package property.


\vspace{1ex}


Notice that each induction step, both of the first and second kind, increases the Lipschitz constant by any factor $\epsilon > 1$ if the value group $vK$ has no minimal element among the elements $>1$, and by some power $\epsilon^{\omega}$ if $\epsilon$ is the minimal element from among the elements $>1$. This is obvious for the first kind.

\vspace{1ex}

The second kind requires the following observation. Suppose that, after extension, we have got a new global $\epsilon$-Lipschitz center $c(x')$, $x' = (x_{1},\ldots, x_{k})$. Applying the homothetic substitution of $x/\varepsilon$ for $x$ with $|\varepsilon| = \epsilon$, we get a new, this time 1-Lipschitz center, and the new 1-Lipschitz function $f_{1}(x)$ given by the formula
$$ f_{1}(x) := f(x'/\varepsilon,x'') \ \ \text{where} \ \ x'' = (x_{k+1},\ldots, x_{n}). $$
Then, given a global 1-Lipschitz extension $F_{1}(x)$ of $f_{1}(x)$, the function
$$ F(x) := F_{1}(\varepsilon x',x'')) $$
is the desired global extension of $f$ which is merely $\epsilon$-Lipschitz.

\vspace{1ex}

The exponent $\omega = \omega(n,k)$ depends on the type $(n,k)$ of induction step. Clearly, we get the following formulae
$$ \omega(n,0)=0, \ \ \omega(n,k) = \omega(n,k-1) + \omega(k-1,k-1) \ \ \text{if} \ 1 \leq k \leq n, $$
and hence $\omega(n,k) = \omega(k) = 2^{k-2}$ if $k \geq 2$, and $\omega(n,0) = \omega(n,1) =0$.

\vspace{1ex}

In the proof, the increase of the Lipschitz constant will be clearly indicated during the induction steps of the first type~$(n,k-1)$ (ordinary version). For simplicity, we shall omit this in the induction step of the second type (uniform version). But all the induction steps are taken into account when calculating the Lipschitz constant.  




\vspace{1ex}

\section{Cell decomposition in Hensel minimal structures}

We begin by introducing some necessary notation and terminology.
Denote by $\mathcal{O}_{K}$ and $\mathcal{M}_{K}$ the valuation ring of the valued field $K$ and its maximal ideal, respectively.
We use multiplicative convention for the value group and valuation map:
$$ | \cdot |: K^{\times} \to |K|, \ \ |0| := 0. $$
The ultra-metric norm on the affine space $K^n$ is the maximum norm
$$ |(x_{1},\ldots,x_{n})| := \max \, \{ |x_{1}|, \ldots, |x_{n}| \}. $$
The auxiliary sort:
$$ RV(K) := G(K) \cup \{ 0 \}, \ \ G(K) := K^{\times}/(1+\mathcal{M}_{K}), $$
plays an essential role in geometry and model theory of valued fields. In particular, it provides parameters for definable families.

\vspace{1ex}

We adopt the following convention: the words 0-definable and $X$-definable mean $\mathcal{L}$-definable and $\mathcal{L}_{X}$-definable, i.e.\ definable with parameters from $X$.

\vspace{1ex}

For $m \leq n$, denote by $\pi_{\leq m}$ or $\pi_{< m+1}$ the projection $K^{n} \to K^{m}$ onto the first $m$ coordinates; put $x_{\leq m} = \pi_{\leq m}(x)$. Let $C \subset K^{n}$ be a non-empty 0-definable set, $j_{i} \in \{ 0, 1 \}$ and
$$ c_{i} : \pi_{<i}(C) \to K $$
be 0-definable functions $i=1,\ldots,n$. Then $C$ is called a 0-definable cell with center tuple $c = (c_{i})_{i=1}^{n}$ and of cell-type $j =(j_{i})_{i=1}^{n}$ if it is of the form:
\begin{equation}\label{cell}
  C = \left\{ x \in K^{n}: (rv(x_{i} - c_{i}(x_{<i})))_{i=1}^{n} \in R \right\},
\end{equation}
for a (necessarily 0-definable) set
$$ R \subset \prod_{i=1}^{n} \, j_{i} \cdot G(K), $$
where $0 \cdot G(K) = {0} \subset RV(K)$ and $1 \cdot G(K) = G(K) \subset RV(K)$. One can similarly define $A$-definable cells.

\vspace{1ex}

In the absence of the condition that algebraic closure and definable closure coincide in $T = \mathrm{Th}\, (K)$, i.e.\ the algebraic closure $\mathrm{acl}\,(A)$ equals the definable closure $\mathrm{dcl}\,(A)$ for any Henselian field $K' \equiv K$ and every $A \subset K'$, the following concept of reparameterized cells must come into play.

\vspace{1ex}

Consider a 0-definable function $\sigma: C \to RV(K)^{s}$. Then $(C,\sigma)$ is called a 0-definable reparameterized (by $\sigma$) cell if each set $\sigma^{-1}(\xi)$,  $\xi \in \sigma(C)$, is a $\xi$-definable cell with some center tuple $c_{\xi}$ depending definably on $\xi$ and of cell-type independent of $\xi$. One can always modify $\sigma$ so that each $\sigma^{-1}(\xi)$ is either empty or a single twisted box.

\vspace{1ex}

We have the following fundamental theorem on Lipschitz cell decomposition compatible with $RV$-parameters (cf.~\cite[Theorem~5.7.3]{C-H-R}).

\begin{theorem}\label{cell-decomposition}
For every 0-definable sets
$$ X \subset K^{n} \ \ \text{and} \ \ P \subset X \times RV(K)^{t}, $$
there exists a finite decomposition of $X$ into 0-definable reparametrized cells $(C_{k},\sigma_{k})$ such that the fibers of $P$ over each twisted box of each $C_{k}$ are constant or, equivalently, the fiber of $P$ over each $\xi \in RV(K)^{t}$ is a union of some twisted boxes from the cells $C_{k}$.

Furthermore, one can require that each $C_{k}$ is, after some coordinate permutation, a reparametrized cell of type $(1,\ldots,1,0,\ldots,0)$ with 1-Lipschitz centers
$$ c_{\xi} = (c_{\xi,1},\ldots,c_{\xi,n}), \ \ \xi \in \sigma_{k}(C_{k}). $$
Such cells $C_{k}$ shall be called 0-definable reparametrized Lipschitz cells.    \hspace*{\fill} $\Box$
\end{theorem}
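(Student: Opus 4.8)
This is (a variant of) \cite[Theorem~5.7.3]{C-H-R}, and the plan is to explain how to assemble it in two stages.

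\emph{Stage 1: an ordinary reparametrized decomposition compatible with $P$.} First I would invoke the basic cell decomposition theorem for 1-h-minimal theories to partition $X$ into finitely many $0$-definable reparametrized cells $(C_k,\sigma_k)$ such that over each twisted box the fiber of $P$ is constant, ignoring for the moment the Lipschitz requirement. This proceeds by induction on $n$. The case $n=1$ rests on the basic feature of 1-h-minimality: every $0$-definable subset of $K$ (and, uniformly, every fiber of a $0$-definable subset of $K\times(RV)^{t'}$) is controlled, away from a finite $0$-definable set of centers, by $rv$-conditions, and one may in addition prepare $P$ so that its fiber is constant on each resulting ``ball''; the datum ``which ball one lies in'' is recorded by $\sigma$. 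For the inductive step one fibers $X$ over $\pi_{<n}(X)\subset K^{n-1}$, treats the last coordinate fiberwise by the one-dimensional case (pushing the new $RV$-data into $\sigma$), and applies the induction hypothesis on $K^{n-1}$ to $\pi_{<n}(X)$ together with the $RV$-valued functions describing the fibers and $P$. It matters here that the maps $\sigma_k$ may afterwards be refined, i.e.\ composed with further $RV$-valued functions, without disturbing this compatibility; that is what makes Stage~2 possible.

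\emph{Stage 2: Lipschitz normalization of the centers.} Now fix a reparametrized cell $(C,\sigma)$ with centers $c_{\xi,1},\dots,c_{\xi,n}$. Enlarging $\sigma$ and applying the Jacobian property (a basic consequence of 1-h-minimality) one variable at a time, I may assume that for each $i$ and each $\ell<i$ the restriction of $x_\ell\mapsto c_{\xi,i}$ to a twisted box is either locally constant or satisfies $rv\big(c_{\xi,i}(x)-c_{\xi,i}(x')\big)=\mu_{i,\ell}\cdot rv(x_\ell-x_\ell')$, with $\mu_{i,\ell}:=rv(\partial_\ell c_{\xi,i})$ constant on the box; thus on each box $c_{\xi,i}$ is, up to the dilation by $\mu_{i,\ell}$, a risometry in $x_\ell$. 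Since in the ultrametric setting $1$-Lipschitzness for the maximum norm is equivalent (by telescoping) to $1$-Lipschitzness in each variable separately, the center $c_{\xi,i}$ is already $1$-Lipschitz whenever all the slopes $\mu_{i,\ell}$ attached to it have absolute value $\le 1$. If some $|\mu_{i,\ell}|>1$, then $x_\ell\mapsto c_{\xi,i}$ is, up to rescaling, a risometry of the box onto a strictly larger ball; one then reshuffles the coordinates so that this coordinate becomes a ``graph'' coordinate and replaces the affected center by the corresponding inverse, whose slope now has absolute value $<1$. Iterating this, choosing at each step a variable realizing the largest slope, normalizing the centers in a suitable order, and keeping a well-founded bookkeeping of the coordinate permutations (the step modelled on o-minimal Lipschitz cell decomposition, made rigid here by the ultrametric together with the Jacobian property), one reaches a decomposition in which all centers are $1$-Lipschitz. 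A last coordinate permutation brings each cell to type $(1,\dots,1,0,\dots,0)$ with the graph centers depending only on the free coordinates; since the re-indexings are performed through $1$-Lipschitz substitutions no Lipschitz constant grows, and since every move is a refinement of $\sigma$, the $P$-compatibility established in Stage~1 survives.

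\emph{Main obstacle.} The genuine difficulty is Stage~2: forcing the centers to be $1$-Lipschitz while keeping the construction $0$-definable. Because $\mathrm{acl}$ need not coincide with $\mathrm{dcl}$, one cannot in general pick the coordinate to be swapped, or the pieces of a refinement, $0$-definably without passing to $RV$-parameters --- this is exactly why reparametrized cells are unavoidable and why the maps $\sigma_k$ must be threaded through the whole argument. Intertwined with this is the combinatorial problem of organizing the induction so that repairing one center does not reintroduce a large slope in another; this forces a careful order of operations and a verification that swapping a pair of coordinates leaves the already-normalized deeper centers $1$-Lipschitz up to composition with coordinate projections. The tool that makes this go through in the non-locally-compact setting, where the usual compactness and monotonicity arguments are not available, is precisely the Jacobian property supplied by 1-h-minimality.
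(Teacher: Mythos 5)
The paper gives no proof of this statement at all: it is quoted directly from Cluckers--Halupczok--Rideau (cf.\ \cite[Theorem~5.7.3]{C-H-R}), so there is no in-paper argument to compare yours against. Your two-stage sketch (ordinary reparametrized cell decomposition compatible with the $RV$-data, followed by Lipschitz normalization of the centers via the Jacobian property and coordinate permutations, with the reparametrizations $\sigma_k$ absorbing the extra $RV$-parameters forced by $\mathrm{acl}\neq\mathrm{dcl}$) is a faithful outline of how the cited result is actually established in that reference, so it is consistent with the route the paper relies on.
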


Under the assumptions of the above theorem, we may regard $P$ as a 0-definable family of subsets of $X$ parametrized by $RV(K)^t$, and say that the cell decomposition into cells $C_{k}$ is compatible with that family. This also ensures the existence of cell decompositions compatible with imaginary parameter from the auxiliary sort $RV$. We shall use it in Sections~5 and~6 for construction of a skeleton of a 0-definable open Lipschitz cell in higher dimensions.

\vspace{1ex}

We still need the following

\begin{proposition}\label{graph}
Every 0-definable subset $A \subset K^n$ is a finite disjoint union of 0-definable sets $E$ that are, after some permutation of the variables, disjoint unions of the form
$$ E := \bigcup_{\eta \in \tau(D)} \, \bigcup_{j=1}^{d} \, \mathrm{graph}\, \phi_{\eta,j} \subset K^n, $$
where
$$ D = \bigcup_{\eta \in \tau(D)} \, D_{\eta} \subset K^{k}_{x_{\leq k}} $$
is a 0-definable reparametrized Lipschitz open cell in $K^{k}_{x_{\leq k}}$,
$$ \phi_{\eta,j}: D_{\xi} \to K^{n-k}_{x_{> k}}, \ \ \eta \in \tau(D), \ j =1,\ldots,d, $$
are 1-Lipschitz functions, and the set
$$ \bigcup_{j=1}^{d} \, \mathrm{graph}\, \phi_{\eta,j} \subset K^n $$
is $\eta$-definable for each $\eta \in \tau(D)$.
\end{proposition}

\begin{proof}
We shall apply cell decomposition 
with reparametrized cells parametrizing single twisted boxes. We may assume that $A$ is, after some coordinate permutation, a 0-definable reparametrized Lipschitz cell $C$ of dimension $k$ of type $(1,\ldots,1,0,\ldots,0)$. Put
$$ D := \pi_{\leq k}(C) = \bigcup_{\xi} \, D_{\xi} \ \ \text{with} \ \ D_{\xi} := \pi_{\leq k}(C_{\xi}) \subset K_{x_{\leq k}}. $$
The restriction
$$ p: \bigcup_{\xi} \, \mathrm{graph} (c_{\xi,k+1}, \ldots, c_{\xi,n}) \to D $$
of the projection $\pi_{\leq k}$  has of course finite, thus uniformly bounded fibres; say, of maximum cardinality $l$.
Since $D$ is the union of the interiors $\mathrm{int} (D_{d})$ of the sets
$$ D_{d} := \{ x_{1} \in D: \; \# \, p^{-1}(x_{1})= d \}, \ \ d \leq l, $$
and of a set of dimension $< k$, we can assume, by a routine induction argument, that $D = \mathrm{int} (D_{d})$, and thus that the constant cardinality of the fibers of $p$ is $d$. Therefore the 0-definable family
$$ D_{\bar{\xi}} := \left\{ u \in \bigcap_{j=1}^{d} \, D_{\xi_{j}}: \ \# \, \{ c_{\xi_1,>k}(u),\ldots,c_{\xi_d,> k}(u) \} = d \right\}, $$
with $\bar{\xi} = (\xi_1,\ldots,\xi_d) \in \sigma(C)^{d}$, covers $D$, and even is its partition. Indeed, the first assertion follows from that we have
$$ D_{\lambda} = \bigcup_{\lambda \in \bar{\xi}} \, D_{\bar{\xi}}. $$
for $\lambda \in \sigma(C)$, and the second from that a reprametrized cell is a disjoint union of cells.

\vspace{1ex}

Now it follows from Theorem~\ref{cell-decomposition} that $D$ is a finite disjoint union of 0-definable reparametrized Lipschitz cells $D_{l}$ compatible with that family.
Throwing away pieces of lower dimension and traeting them by induction, we can thus assume that
$$ D = \bigcup \, \left\{  D_{\eta}: \ \eta \in  \tau (D)   \right\},  $$
is one of the open reparametrized cells of the decomposition. Then, since each cell (actually, a single twisted box) $D_{\eta}$ is contained in exactly one of the sets $D_{\bar{\xi}}$ with $\bar{\xi} = (\xi_1,\ldots,\xi_d)$, we can put
$$ \phi_{\eta,j} := c_{\xi_j}|D_{\eta}: D_{\eta} \to K^{n-k}, \ \ \eta \in \tau(D), \ j=1,\ldots,d. $$
Then the set
$$ E := \bigcup_{\eta \in \tau(D)} \, \bigcup_{j=1}^{d} \, \mathrm{graph}\, \phi_{\eta,j} \subset K^n, $$
is $\eta$-definable and we are done.
\end{proof}

\begin{remark}\label{rem-graph}
In the notation of the above proof, each function $\phi_{\eta,j}$ with $\eta \in \tau(D)$ and $d=1,\ldots,d$ is $(\eta, \bar{\xi})$-definable if $D_{\eta} \subset D_{\bar{\xi}}$. Moreover, the set $\bar{\xi}$ is $\eta$-definable.
\end{remark}

\vspace{1ex}

In the easier case where definable algebraic Skolem functions are available, in analogy with o-minimal geometry, cell decomposition is much more of finitary character (cf.~\cite{C-H-R}, Theorem~5.2.4 with the following addenda). Then we can assume that the subset $A$ is an ordinary 0-definable cell. In particular, the case where the subset $A$ is of dimension 0 comes down to the case of a single 0-definable point, and the above proposition can be formulated in terms of finite unions of the graphs of 0-definable 1-Lipschitz functions.

\vspace{1ex}

The general case, however, is much more involved. A key role in the Lipschitz extension problem is then played by a concept of a 0-definable package with a skeleton induced by an open reparametrized cell. We shall introduce this concept recursively to satisfy a metric condition relating the distance between the centers and the size of the twisted boxes: first on the affine line $K$, and next in higher dimensions, which is subtler yet. In the absence of algebraic Skolem functions, reparametrized cells must come into play, and local Lipschitz continuity does not imply piecewise Lipschitz continuity. These two features are reflected in the construction of a package itself.

\vspace{1ex}


\section{Preliminary results}

Now we give some results about the possibility of finite definable partitioning when extending Lipschitz functions, which are necessary throughout the paper.



\begin{proposition}\label{part}
Let $A$ and $B$ be arbitrary 0-definable subsets of $K^n$ and $f: A \cup B \to K$ a 0-definable 1-Lipschitz function which vanishes on $B$. If the restriction $f|A$ of $f$ to $A$ extends to a 0-definable 1-Lipschitz function $F: K^n \to K$, so does $f$ to a 0-definable 1-Lipschitz function $G: K^n \to K$.
\end{proposition}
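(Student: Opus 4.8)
The plan is to build $G$ out of the given extension $F$ of $f|A$ by "correcting" it so that it vanishes on $B$ while retaining $1$-Lipschitzness. The natural candidate is the infimum‐type construction adapted to the non-Archimedean, definable setting: for $x \in K^n$, one would like to set $G(x)$ to be something like $F(x)$ pulled towards $0$ in a way controlled by the distance of $x$ to $B$. Concretely, since $f$ is $1$-Lipschitz and vanishes on $B$, for every $b \in B$ and every $a \in A$ we have $|f(a)| = |f(a)-f(b)| \le |a-b|$, so $|f(a)| \le |a-B| := \inf_{b\in B}|a-b|$; equivalently $|F(a)| \le |a - B|$ on $A$. I would therefore first define the auxiliary $0$-definable function $d(x) := |x - B|$ (well-defined and $\le$-valued in $\Gamma_K$, taking the value of an actual infimum or else needing the closedness/limit machinery — but note we only need it as a truncation threshold, so an appropriate definable choice suffices), and then define
$$
G(x) := \begin{cases} F(x) & \text{if } |F(x)| \le d(x),\\[1mm] 0 & \text{if } |F(x)| > d(x).\end{cases}
$$
This is manifestly $0$-definable, agrees with $f$ on $A$ (since there $|F| \le d$ as shown), and vanishes on $B$ (since $d \equiv 0$ there, forcing the second case unless $F(x)=0$ already).

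The main work is to check that $G$ is $1$-Lipschitz. Take $x,y \in K^n$. If both lie in the first case, $|G(x)-G(y)| = |F(x)-F(y)| \le |x-y|$. If both lie in the second case, $|G(x)-G(y)| = 0$. The real point is the mixed case, say $|F(x)| \le d(x)$ but $|F(y)| > d(y)$, so $G(x) = F(x)$ and $G(y) = 0$; we must show $|F(x)| \le |x-y|$. Here I would use the ultrametric inequality together with the $1$-Lipschitzness of both $F$ and the function $d$. First, $d$ is $1$-Lipschitz: $|d(x) - d(y)| \le |x-y|$ in the usual sense that $d(x) \le \max\{d(y),|x-y|\}$, since for any $b \in B$, $|x-b| \le \max\{|x-y|,|y-b|\}$. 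Now combine: either $|x-y| \ge |F(x)|$ and we are done, or $|x-y| < |F(x)| \le d(x) \le \max\{d(y),|x-y|\}$, which forces $d(x) \le d(y)$ and hence $d(x) \le d(y)$... but we also need to exploit $|F(y)| > d(y)$. From $1$-Lipschitzness of $F$, $|F(x)| \le \max\{|F(y)|, |x-y|\}$; if the max is $|x-y|$ we are done, so assume $|F(x)| \le |F(y)|$. Then $d(y) < |F(y)|$, and I would like to derive $|x-y| \ge |F(x)|$ from the interplay $|F(x)| \le d(x) \le \max\{d(y),|x-y|\}$: if $d(x) \le d(y)$ then $|F(x)| \le d(y) < |F(y)|$; combined with the reverse... this needs one more squeeze. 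The clean way is: suppose for contradiction $|x-y| < |F(x)|$. Then $|x-y| < |F(x)| \le d(x)$, and since $d$ is $1$-Lipschitz, $d(y) \ge d(x) > |x - y|$ is forced (the value cannot drop). Also $|F(y) - F(x)| \le |x-y| < |F(x)|$, so $|F(y)| = |F(x)|$ by the ultrametric equality case. Then $d(y) \ge d(x) \ge |F(x)| = |F(y)|$, contradicting $|F(y)| > d(y)$. Hence $|x-y| \ge |F(x)| = |G(x) - G(y)|$, as required.

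The step I expect to be the genuine obstacle is the definability and well-behavedness of the distance function $d(x) = |x-B|$: in a general $1$-h-minimal field the infimum need not be attained, and $d$ a priori takes values in $\Gamma_K$ rather than in $|K|$, so one must phrase the two cases of the definition of $G$ using a definable predicate like "there exists $b\in B$ with $|F(x)| \le |x-b|$" rather than literally comparing to an infimum. I would replace the displayed case definition by
$$
G(x) := \begin{cases} F(x) & \text{if } \exists\, b \in B:\ |F(x)| \le |x - b|,\\[1mm] 0 & \text{otherwise},\end{cases}
$$
which is $0$-definable directly in $\mathcal{L}$, agrees with $f$ on $A\cup B$ by the computation above (on $A$ one may take $b$ realizing a small enough distance; on $B$ take $b=x$ when $F(x)\neq 0$, giving $0\le 0$, so in fact $G=F$ there and $F|B$ — wait, $F$ need not vanish on $B$, but $f$ does, so we genuinely need the truncation on $B$: if $F(x)\neq 0$ for $x\in B$ then $\exists b=x$ with $|F(x)| \le 0$ is false, so $G(x)=0=f(x)$, good), and the Lipschitz verification goes through verbatim with "$d(x)$" read as the supremum of the predicate's witnesses, all comparisons being between elements that actually occur as $|x-b|$ or as $|F(\cdot)|$, hence in $|K|$. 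Once this definitional subtlety is handled, the ultrametric bookkeeping above completes the proof.
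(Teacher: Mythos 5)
Your construction is genuinely different from the paper's, and (after one correction, see below) it works. The paper never compares $|F(x)|$ with anything; instead it partitions $K^n$ by three purely metric conditions comparing distances to $A$ and to $B$ --- (1): for every $b\in B$ there is $a\in A$ with $|x-a|<|x-b|$; (2): the symmetric condition; (3): neither --- and sets $G=F$ on $(1)\wedge\neg(2)$ and $G=0$ on $(2)\vee(3)$, then verifies the two mixed cases by producing witnesses $a,b,a_0,b_0$ and chasing ultrametric inequalities. Your version thresholds $|F(x)|$ against the distance to $B$ alone, which makes the case split simpler and the Lipschitz verification an almost immediate ultrametric computation (your contradiction argument in the mixed case is correct: from $|x-y|<|F(x)|\le|x-b_0|$ and $|y-b_0|<|F(y)|=|F(x)|\le|x-b_0|$ one contradicts $|x-b_0|\le\max\{|x-y|,|y-b_0|\}$). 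What the paper's formulation buys is a description of $G$ that depends only on the geometry of $A$ and $B$, which feeds into the explicit identification of $G$ on the various pieces $\overline A\setminus\overline B$, $A\cap(\overline B\setminus B)$, etc.\ recorded after the proof; what yours buys is brevity. Your worry about the infimum $d(x)=|x-B|$ not being attained is also correctly resolved in principle: no infimum is ever needed, only the first-order predicate.

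However, the final displayed definition of $G$ is wrong as written: the first case must read $\forall\, b\in B:\ |F(x)|\le|x-b|$ (which is what ``$|F(x)|\le d(x)$'' means), not $\exists\, b\in B$. With $\exists$, a point $x\in B$ with $F(x)\ne 0$ lands in the first case as soon as $B$ contains some other point $b'$ with $|x-b'|\ge|F(x)|$, giving $G(x)=F(x)\ne 0=f(x)$, so $G$ fails to extend $f$ on $B$; your own check of this case silently tests only the single witness $b=x$, i.e.\ treats the $\exists$ as a $\forall$. The same slip would also wreck the mixed-case Lipschitz argument, which needs the universal statement at $x$ and an existential witness $b_0$ at $y$. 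With the quantifier corrected, everything you wrote goes through and the predicate is $0$-definable, so the proposal is a valid (and shorter) alternative proof.
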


\begin{proof}
We begin by formulating two symmetric conditions for the points of $x \in K^n$:

\vspace{1ex}

(1) \ \  $\forall \, b \in B \; \exists \, a \in A \ \ |x-a| < |x-b|$;

(2) \ \  $\forall \, a \in A \; \exists \, b \in B \ \ |x-b| < |x-a|$.

\vspace{1ex}

Note that the condition
$$ (3) := \neg [ (1) \vee (2) ] $$
is equivalent to
$$ \exists \, b_0 \in B, a_0 \in A  \; \forall \, a \in A, b \in B \ \ |x-a| \geq |x-b_0| \ \wedge \ |x-b| \geq |x-a_0|, $$
and thus condition (3) means that
$$ \exists \, b_0 \in B, a_0 \in A  \; \forall \, a \in A, b \in B \ \ |x-a|, |x-b| \geq |x-a_0| = |x-b_0|. $$

It is easy to check that condition~(1) holds on $A \setminus B$, condition~(2) on $B \setminus A$, and condition~(3) on $A \cap B$. Hence
the function
$$ G(x) = \left\{ \begin{array}{cl}
                        F(x) & \mbox{ if } \ x \ \mbox{ satisfies } [(1) \wedge \neg (2)], \\
                        0 & \mbox{ if } \ x \ \mbox{ satisfies } [(2) \vee (3)].
                        \end{array}
               \right.
$$
is an extension of $f$.
We shall show that $G$ is a 1-Lipschitz function. It suffices to analyze the value $|G(x) -G(y)|$ in the only two non-trivial cases where:

\vspace{1ex}

I. $x$ satisfies $[(1) \wedge \neg (2)]$ and $y$ satisfies $(2)$,

\noindent
or

II. $x$ satisfies $[(1) \wedge \neg (2)]$ and $y$ satisfies $(3)$.

\vspace{2ex}

{\em Case I.} We first show that
$$ \exists \, b \in B \ \ |y-b| \leq |x-y|. $$
Otherwise
$$ \forall \, b \in B \ \ |y-b| > |x-y|. $$
By condition~(2), for any $a \in A$ there is an element $b \in B$ such that $|y-b| < |y-a|$. Then
$$ |y-b| = |x-b|, \ \ |y-a| = |x-a| \ \text{ and } \ |x-b| < |x-a|. $$
Therefore $x$ satisfies condition (2), which is a contradiction.

By condition~(1), we get $|x-a| < |x-b|$ for some $a \in A$. Then
$$ |F(x) - f(a)| \leq |x-a| < |x-b|, $$
$$ |f(a)| = |f(a) - f(b)| \leq |a-b| = |x-b| \ \ \text{and} \ \ |F(x)| \leq |x-b|.  $$
Hence
$$ |G(x) - G(y)| = |F(x) - 0| =|F(x)| \leq |x-b| \leq |x-y|, $$
as desired.

\vspace{1ex}

{\em Case II.} Since $y$ satisfies condition (3), it follows that
$$ \exists \, b_0 \in B, a_0 \in A  \; \forall \, a \in A, b \in B \ \ |y-a|, |y-b| \geq |y-a_0| = |y-b_0|. $$

Were
$$ |x-y| < |y-a_0| = |y-b_0|, $$
we would get for every $a \in A$ the inequalities
$$ |x-y| < |y - a_0| \leq |y-a| \ \ \text{and} \ \ |x-y| < |y-b_0| = |x-b_0|. $$
Hence
$$ |x-a| = |y-a| \geq |y - b_0| = |x - b_0|, $$
and thus $x$ does not satisfy condition (1), which is a contradiction.

Consequently, we get
$$ |x-y| \geq |y-a_0| = |y-b_0|, $$
and thus
$$ |x-a_0| \leq |x-y|, \ \ |x-b_0| \leq |x-y| \ \  \text{and} \ \ |a_0 - b_0| \leq |x-y|. $$
Hence
$$ |F(x) - f(a_0)| \leq |x-a_0|, \ \ |f(a_0)| = |f(a_0) - f(b_0)| \leq |a_0 - b_0|, $$
and thus
$$ |F(x)| \leq \max \, \{ |x-a_0|, |a_0 - b_0| \} \leq |x-y|.   $$
Therefore
$$ |G(x) - G(y)| = |F(x)| \leq |x-y|, $$
as desired. This finishes the proof.
\end{proof}

\vspace{1ex}

Observe more precisely that the points $x$ from
$$ \overline{A} \setminus \overline{B}, \ \ \overline{B} \setminus \overline{A}, \ \ (\overline{A} \setminus A) \cap (\overline{B} \setminus B), \ \
   (\overline{A} \setminus A) \cap B, \ \ A \cap (\overline{B} \setminus B), \ \ A \cap B, $$
satisfy respectively the following conditions
$$ (1) \wedge \neg (2), \ \ (2) \wedge \neg (1), \ \ (1) \wedge (2), \ \ (2) \wedge \neg (1), \ \ (1) \wedge \neg (2), \ \ (3); $$
here $\overline{A}$ denotes the topological closure of a set $A$. Therefore the value $G(x)$ at the points $x$ from the above subsets of $K^n$ is respectively equal to
$$ F(x), \ \ 0, \ \ 0, \ \ 0, \ \ F(x), \ \ 0. $$
Hence
$$ G|(A \setminus \overline{B}) = F|(A \setminus \overline{B}), \ \ G|(B \setminus \overline{A}) = 0, \ \ G|((\overline{A} \setminus A) \cap  B) =0, \ \ $$
$$ G|(A \cap (\overline{B} \setminus B)) = F|(A \cap (\overline{B} \setminus B)), \ \ G|(A \cap B) = 0. $$

\vspace{1ex}

We say that a 0-definable subset $A$ of $K^n$ has the {\em Lipschitz extension property}, abbreviated to {\em LE-property}, if every 0-definable 1-Lipschitz function $f:A \to K$ extends to a 0-definable 1-Lipschitz function  $F: K^n \to K$.

\begin{corollary}\label{LEP} (Partition Lemma)
Consider a finite number of 0-definable subsets $A_1,\ldots,A_s$ of $K^n$. If each of them has the LE-property, so does their union $A_1 \cup \dots \cup A_s$.
\end{corollary}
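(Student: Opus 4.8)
The plan is to argue by induction on the number $s$ of sets, the case $s=1$ being vacuous, and to reduce the inductive step to a single application of Proposition~\ref{part}. Write $A := A_1 \cup \dots \cup A_{s-1}$ and $B := A_s$, so that by the induction hypothesis $A$ has the LE-property and we must show that $A \cup B$ does. Let $f : A \cup B \to K$ be an arbitrary 0-definable 1-Lipschitz function; the goal is to produce a 0-definable 1-Lipschitz $F : K^n \to K$ restricting to $f$.

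The key device is to first straighten out the behaviour on $B$. Since $B = A_s$ has the LE-property, the restriction $f|B$ extends to a 0-definable 1-Lipschitz function $F_s : K^n \to K$. I would then pass to $g := f - F_s$ on $A \cup B$. Because the norm on $K^n$ is the maximum (ultrametric) norm, a difference of 1-Lipschitz functions is again 1-Lipschitz, so $g$ is a 0-definable 1-Lipschitz function, and by construction it vanishes on $B$. This is exactly the hypothesis of Proposition~\ref{part}, provided we know that $g|A$ extends.

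That last point is where the induction hypothesis enters: $g|A$ is a 0-definable 1-Lipschitz function on $A = A_1 \cup \dots \cup A_{s-1}$, which has the LE-property, hence $g|A$ extends to a 0-definable 1-Lipschitz $G' : K^n \to K$. Proposition~\ref{part} then furnishes a 0-definable 1-Lipschitz extension $G'' : K^n \to K$ of $g$ itself. Finally I would set $F := G'' + F_s$, which is again 0-definable and 1-Lipschitz by the ultrametric inequality, and which restricts on $A \cup B$ to $g + F_s = f$, completing the induction.

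I do not expect a genuine obstacle here: all the analytic content is already packaged in Proposition~\ref{part}, and the only things to verify are the two routine facts that sums and differences of 1-Lipschitz maps remain 1-Lipschitz in the ultrametric setting, and that every function built above (namely $F_s$, $g$, $G'$, $G''$, $F$) is 0-definable, which is immediate since each is obtained from 0-definable data by 0-definable operations.
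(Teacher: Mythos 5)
Your proposal is correct and is essentially the paper's own argument: extend $f$ on one piece using its LE-property, subtract to get a 1-Lipschitz function vanishing on that piece, invoke Proposition~\ref{part} (using the induction hypothesis to extend the restriction to the remaining union), and add the two extensions back together. The only cosmetic difference is that the paper phrases the induction as a reduction to the case $s=2$, whereas you carry the inductive step directly; the content is identical.
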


\begin{proof}
Induction with respect to the number $s$ reduces the proof to the case $s=2$. Then the conclusion follows directly from Proposition~\ref{part}. Indeed, let $f: A_1 \cup A_2 \to K$ be a 0-definable 1-Lipschitz function, $F_2$ be a 0-definable 1-Lipschitz extension of the restriction $f|A_2$. Then the function
$$ g := f - F_2|(A_1 \cup A_2) $$
vanishes on $A_2$, and thus, by Proposition~\ref{part}, $g$ extends to a 0-definable 1-Lipschitz function $F_1: K^n \to K$. Then the function $F := F_1 + F_2$ is an extension of the initial function $f$, concluding the proof.
\end{proof}

\vspace{1ex}

\section{Lipschitz extension on the affine line}

If $A \subset K$ is of dimension zero, then $A$ is a finite subset of $K$, and it is easy to check that the function
\begin{equation}\label{ext-fin-0}
  F: K \ni x \mapsto \frac{\sum f(A(x))}{\# \, A(x)} \in K,
\end{equation}
where
$$ A(x) := \{ a \in A: \ |x-a| = \min \, \{ |x-s|: \ s \in A \} \}, $$
is a 0-definable 1-Lipschitz extension of $f$ we are looking for.

\vspace{1ex}

By Corollary~\ref{LEP}, it suffices to consider the case where
$$ A=C = \bigcup_{\xi} \, C_{\xi} $$
is an 0-definable reparametrized open cell; and further, as explained in Section~1, that
$f: C \to K$ satisfies condition~\ref{risometry}, which means that $f$ is a risometry onto its image.

\vspace{1ex}

The concept of a 0-definable reparametrized open cell on the affine line is quite simple in comparison with higher dimensions. Indeed, the set of centers $c_{\xi} \in K$ of $C$ is finite, and consists of some $d$ distinct points $\{ c_1,\ldots,c_d \} \subset K$. Put
$$ C_{i} := \bigcup \, \{ C_{\xi}: \ c_{\xi} = c_{i} \} \subset K $$
and
$$ R_{i} := \bigcup \, \{ R_{\xi}: \ c_{\xi} = c_{i} \} \subset G(K) \subset RV(K); $$
clearly each set $C_{i}$ and $R_{i}$ is $c_{i}$-definable for $ i=1,\ldots,d$. We thus get the following

\begin{observation}\label{cell-fin}
Every 0-definable reparametrized open cell $C$ in $K$ is the disjoint union of a finite 0-definable family of open cells $C_{i}$, $i=1,\ldots,d$.
\end{observation}


We are now going to introduce the concept of a skeleton of $C$. The infima
\begin{equation}\label{inf}
\rho(c_i) := \inf \, \{ |x -c_{i}|: \ x \in C_{i} \} \in |K|, \ \ i=1,\ldots,d,
\end{equation}
do not exist in general. Instead we shall consider the Dedekind cuts $\rho(c_i)$ determined by the closed downward subsets of all elements of $|K|$ which are smaller than the sets
$$ \left \{ |x -c_{i}|: \ x \in \bigcup \, \left\{ \sigma^{-1}(\xi): \ c_{\xi} = c_{i} \right\} \right\}. $$
They only serve to group the centers of the cells under study. For any Dedekind cut $\rho$ on the value group $|K|$ and an $r \in |K|$, the inequalities $r < \rho$ and $r > \rho$ have a clear meaning.

\vspace{1ex}

Write the 0-definable set of the values $\rho(c_i)$ in the ascending order:
$$ r_0 := 0 < r_1 < r_2 < \dots < r_s; $$
take $r_0 =0$ if $0$ occurs among the $\rho(c_i)$. For $i=0,\ldots,s$, let
$$ c_{i,j} \ \ \text{with} \ \ j=1,\ldots,t_{i}, $$
be the centers for which $\rho(c_{i,j}) =r_{i}$; obviously, $s \leq d$ and $$ t_0 + t_1 + \ldots + t_s = d. $$

\begin{remark}\label{rem-single}
Every finite set
$$ \{ c_{i,1},\ldots,c_{i,t_{i}} \}, \ \ i=0,\ldots,s, $$
is 0-definable, because so is every single Dedekind cut $r_{i}$ in the presence of order relation.
\end{remark}

We can canonically define, by modifying the initial centers, a {\em skeleton} $S(C)$ of $C$, which is a 0-definable set of centers satisfying the following elementary metric condition:
\begin{equation}\label{skeleton}
  |c_{i,j} - c_{p,k}| > r_{i} \ \text{and} \ |c_{i,j} - c_{p,k}| > r_{p},
\end{equation}
which plays a crucial role in Lipschitz extension.

\vspace{1ex}

Skeleton will be built through successive steps taking into account the Dedekind cuts $r_{i}$ considered above. Now we outline its construction.

\vspace{1ex}

$\bullet$ At level~0, all the centers $c_{0,j}$ will be added to the skeleton without modification.

\vspace{1ex}

$\bullet$ At level~1, first remove from the centers $c_{1,j}$, for construction of skeleton only, all those for which
$$ |c_{1,j} - c_{0,k}| \leq r_1 $$
for some $k$. Note that, for any such removed center $c_{1,j}$, the cell $C_{1,j}$ is also a cell with center $c_{0,k}$ (with the same set $R_{1,j}$).
Next notice that the equivalence relation
$$ a \sim b \ \Leftrightarrow \ |a-b| \leq r_{1} $$
partitions the set of the remaining centers from among $c_{1,j}$ into equivalence classes. Any equivalence class is a maximal subset of that set, say $c_{1,j_1}, \ldots, c_{1,j_l}$, such that
$$ |c_{1,j_k} - c_{1,j_q}| \leq r_1 \ \ \text{for all} \ \ 1 \leq k, \ q \leq l. $$
It is easy to check that either the arithmetic average
$$ c := \frac{c_{1,j_1} + \ldots + c_{1,j_l}}{l} $$
lies in no cell $C_{1,j_1}, \ldots, C_{1,j_l}$, or it lies in a ball of radius $r_{1}$ from exactly one of those cells, say $C_{1,j_{k}}$. We can thus replace each center $c_{1,j_q}$ from such a maximal set with $c$ or $c_{1,j_{k}}$ (without change of the set $R_{1,j_q}$), according as the former or latter condition holds, and add such a new center to the skeleton. In this manner, any such equivalence class determines one point to be added to the skeleton.
By Remark~\ref{rem-single}, the set of centers from the skeleton, obtained at this level~1, form a 0-definable set satisfying condition~\ref{skeleton} for $0 \leq i,p \leq 1$.

\vspace{1ex}

$\bullet$ At level 2, first remove from the centers $c_{2,j}$, for construction of skeleton only, all those for which
$$ |c_{2,j} - c_{0,k}| \leq r_2 \ \ \text{or} \ \ |c_{2,j} - c_{1,k}| \leq r_2  $$
for some $k$. And then reason as before.

\vspace{1ex}

$\bullet$ Further, repeat the above construction at level 3 for the centers $c_{3,j}$, and so on.

\vspace{1ex}

This canonical process leads eventually to a unique 0-definable set of centers
$$ S(C) = \{ \tilde{c}_{1},\ldots, \tilde{c}_{e} \} \ \ \text{with} \ e \leq d, $$
satisfying condition~\ref{skeleton}.
Then we say that the reparametrized cell $C$ is with the {\em skeleton} $S(C)$. Obviously,
the center exchange in this construction does not change the distance from the center to its balls.

\vspace{1ex}

Obviously, the points of the skeleton are not 0-definable individually, but the skeleton as a whole is a 0-definable finite set. And the initial reparametrized open cell $C$ is the finite union of open cells
$$ \{ x_{1} \in K: \ rv(x_{1} - \tilde{c}_{i}) \in R_{i} \}, \ \ i=1,\ldots,e, $$
with a 0-definable family $R_{1} \times \ldots \times R_{e} \subset G(K)^{e}$, called an open cell package determined by the skeleton $S(C)$ and that family. We then say that this 0-definable package is determined by the family
$$  (\tilde{c}_{1},R_{1}),\ldots,(\tilde{c}_{e},R_{e}). $$

Under the above assumptions, the skeleton $S(C)$ of a given re\-pa\-rametrized open cell $C$ depends canonically on the initial set of $d$ pairs
$$ \{ (c_{1},\rho(c_{1})), \ldots, (c_{d},\rho(c_{d})) \}. $$
There are finitely many configurations of metric character, say
$$ \mathfrak{C}_1,\ldots,\mathfrak{C}_N, \ \ N = N(d), $$
of the initial $d$-tuples of pairs $(c_{i},\rho(c_{i}))_{i=1}^{d}$, such that for each of which the skeleton is given by a unique formula (built from the $d$-tuples and some arithmetic means occurring in the construction), which depends only on a configuration. A given reparametrized open cell $C$ can be treated as one whose set of centers is just its skeleton.

\begin{remark}\label{configuration}
In the case of higher dimensions, we shall analyze the above construction uniformly in definable families of reparametrized open cells, with valued field parameters running the affine spaces of lower dimension. This will be done by partitioning the parameter space into 0-definable cells to provide a common configuration, and even to provide the same metric $vK$-parameters describing the configuration on each twisted box of the cells thus resulted.
\end{remark}

In order to achieve the goal set out in the above remark, we need the strengthening of the fact that the imaginary sort $RV$ is stably embedded, stated below. This follows directly from the proof of~\cite[Proposition~2.6.12]{C-H-R}.

\begin{lemma}\label{rem-stable}
Given a 0-definable subset $X$ of $K^{n} \times RV(K)^{m}$, there exists
a 0-definable map
$$ \chi: K^{n} \to RV(K)^{l}, $$
with some $l \in \mathbb{N}$, such that the $\{ a \}$-definable fiber $X_{a} \subset RV(K)^n$ is $\chi (a)$-definable for any $a \in K^{n}$. Then we obtain the 0-definable family
$$ \{ \chi(a) \} \times X_{a} \subset RV(K)^{l} \times RV(K)^{m}, \ \ a \in K^{n}. $$
\end{lemma}


Indeed, dealing with a 0-definable family of reparametrized open cells $C_{t} \subset K$ with centers $c_{1}(t),\ldots,c_{d}(t)$ and with a valued field parameter $t \in T$, we get a 0-definable family
$$ (\chi(c_{i}(t)),\rho(c_{i}(t))), \ \ i=1,\ldots,d, \ t \in T, $$
of pairs with Dedekind cuts, which are imaginary elements of the auxiliary sort $RV$. Hence and by Theorem~\ref{cell-decomposition}, we obtain the following


\begin{observation}\label{config-code}
Under these assumptions, there exists a finite decomposition of $T$ into 0-definable reparametrized cells such that the centers $c_{1}(t),\ldots,c_{d}(t)$ form a common configuration (even with the same metric $vK$-parameters describing the configuration) on each twisted box of those cells. This will be used later in the construction of a skeleton for higher dimensions.
\end{observation}


By the LE-property (corollary~\ref{LEP}), the extension problem on the affine line comes down to extending Lipschitz 0-definable functions from such a 0-definable open cell package.
We shall still need a basic, though elementary, proposition whose proof uses the concept of a skeleton.

\begin{proposition}\label{1-cell}
Let $\mathcal{B}$ be a 0-definable family of pairwise disjoint open balls in $K$ which satisfies the following condition:

For all balls $B_1,B_2 \in \mathcal{B}$, $B_1 \neq B_2$, with radii $r_1, r_2 \in |K| \setminus \{ 0 \}$, respectively, we have
$$ |b_1 - b_2| = \max \{ r_1, r_2 \} \ \ \text{for every} \ \  b_1 \in B_1, \ b_2 \in B_2. $$
Then $\mathcal{B}$ is a 0-definable cell in $K$.
Since Hensel minimality admits adding constants, the conclusion holds for a $\xi$-definable family as well.
\end{proposition}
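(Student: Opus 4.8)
The plan is to produce a single $0$-definable point $c \in K$ and a $0$-definable set $R \subseteq G(K)$ with $\bigcup_{B \in \mathcal{B}} B = \{ x \in K : rv(x-c) \in R \}$ (the case where this union is a single point being trivial). First I would dispose of the $\xi$-definable case by adjoining $\xi$ as a constant, which keeps the theory Hensel minimal. So assume $\mathcal{B}$ is $0$-definable and put $U := \bigcup_{B \in \mathcal{B}} B$.

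The structural backbone is a chain property. For $B \in \mathcal{B}$ with radius $r(B)$, the closed ball $\widehat{B} := \{ x : |x - b| \le r(B) \}$ does not depend on the choice of $b \in B$, and the separation hypothesis gives at once: if $B_1 \ne B_2$ and $r(B_1) \ge r(B_2)$, then $\widehat{B_2} \subseteq \widehat{B_1}$. Hence $\{ \widehat{B} : B \in \mathcal{B} \}$ is a $0$-definable chain of closed balls, order-isomorphic to the radius set $\{ r(B) : B \in \mathcal{B} \} \subseteq |K| \setminus \{ 0 \}$; moreover every member of $\mathcal{B}$ of radius $< r(B)$ lies in the sphere $\widehat{B} \setminus B$, while every member of radius $> r(B)$ is disjoint from $\widehat{B}$. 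A short ultrametric computation then shows that, inside the least closed ball $\widehat{D}$ of the chain (radius $\rho$, when such a least element exists), all members of $\mathcal{B}$ of radius $< \rho$ sit in one common radius-$\rho$ open sub-ball $O$ of $\widehat{D}$, which is disjoint from every member of $\mathcal{B}$ of radius $\rho$. This exhibits the configuration as a descending tower: an outer shell of isolated balls (radius $> \rho$), then a layer of radius-$\rho$ balls tiling part of $\widehat{D}$, then, recursively, the same configuration again inside $O$ for the strictly smaller family $\{ B \in \mathcal{B} : \widehat{B} \subsetneq \widehat{D} \}$.

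To extract the center I would combine Lipschitz cell decomposition with the skeleton construction. By Theorem~\ref{cell-decomposition} together with Proposition~\ref{cell-fin}, $U$ is a finite disjoint union of honest $0$-definable open cells with centers $c_1, \dots, c_N \in K$, and by stable embeddedness of $RV$ (Remark~\ref{rem-stable}) the finite data involved is $0$-definable. The chain/tower structure forces the tuple $(c_1,\dots,c_N)$ into one of the finitely many configurations of Remark~\ref{configuration}; feeding it into the canonical, choice-free formula that produces a skeleton (the arithmetic-averaging procedure used there) yields a single $0$-definable point $c$. One then checks, again by routine ultrametric arithmetic using only the nestedness of the $\widehat{B}$ and the separation hypothesis, that $|c - b| = r(B)$ for every $B \in \mathcal{B}$ and every $b \in B$, so that each member of $\mathcal{B}$ equals the twisted box $\{ x : rv(x-c) = rv(b-c) \}$; with $R := \{ rv(x-c) : x \in U \} \subseteq G(K)$ this gives $U = \{ x : rv(x-c) \in R \}$, and $R$ is $0$-definable since $U$ and $c$ are. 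The $\xi$-definable case then follows by the same argument over the added constant.

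The hard part is definability in the absence of definable choice: one may not pick centers of the balls, nor representatives in the intermediate merging step, which is exactly why the argument must pass through the canonical configuration-dependent skeleton formula and through stable embeddedness of $RV$. A subsidiary difficulty is that, when the radius set has no least element, the infimum of the radii is only a Dedekind cut on $|K|$, and one must carry that cut through the construction precisely as in the final part of the skeleton construction (cf. Remark~\ref{rem-cut}). Once these definability issues are handled, everything else reduces to elementary ultrametric computation.
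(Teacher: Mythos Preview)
Your argument is correct and follows essentially the same route as the paper: decompose $\bigcup \mathcal{B}$ into finitely many $0$-definable open cells and use the canonical skeleton construction to collapse all centers to a single $0$-definable point. The paper's proof is more terse---it simply asserts that the separation hypothesis forces the skeleton of each reparametrized piece to be a single point and that these points coincide---whereas you make the underlying geometry explicit via the chain of closed balls $\widehat{B}$, but the substance is the same.
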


\begin{proof}
The union of $\mathcal{B}$ is a finite union of 0-definable reparametrized open cells $C_{i}$. It follows from the assumed condition that the skeleton of each of those reparametrized cells consist of one point, and that all those points must coincide. Hence the conclusion follows.
\end{proof}

As observed in Remark~\ref{rem-risometry}, the image $f(B)$ of an open ball $B$ under a definable risometry $f:B \to K$ is an open ball of the same radius. Hence and by Proposition~\ref{1-cell}, we immediately obtain the following corollaries, which will be used (along with model theoretic compactness) in higher dimensions as well.

\begin{corollary}\label{r1}
Let $f:C \to K$ be a 0-definable risometry of a 0-definable open cell $C \subset K$, with a center $c$, onto its image. Then $f(C)$ is a 0-definable cell with a center $d$, and the function
$$ \tilde{f}(x) = \left\{ \begin{array}{cl}
                        f(x) & \mbox{ if } \ x \in C, \\
                        d & \mbox{ if } \ x=c,
                        \end{array}
               \right.
$$
is a risometry onto its image too. The same conclusion holds for a $\xi$-definable open cell as well.   \hspace*{\fill} $\Box$
\end{corollary}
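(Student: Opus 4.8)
The plan is to present $f(C)$ as a family of disjoint open balls to which Proposition~\ref{1-cell} applies, and then to read off the extension from the resulting center. Write $C$ by means of its center $c$ as the disjoint union of its twisted boxes
$$ C = \bigcup_{\xi \in R} B_\xi, \qquad B_\xi := \{ x \in K : rv(x-c) = \xi \} = c + \xi(1+\mathcal{M}_K), $$
with $R \subset G(K)$; each $B_\xi$ is then an open ball of radius $|\xi|$, and $c \notin C$ since $0 \notin R$. By the fact recalled just before the corollary, a definable risometry carries an open ball onto an open ball of the same radius, so each $f(B_\xi)$ is an open ball of radius $|\xi|$; these are pairwise disjoint because $f$ is injective. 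Thus $f(C) = \bigcup_\xi f(B_\xi)$ is a $0$-definable family of disjoint open balls.

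To invoke Proposition~\ref{1-cell} I would verify its metric hypothesis. For $a_1 \in B_{\xi_1}$ and $a_2 \in B_{\xi_2}$ with $\xi_1 \neq \xi_2$ and radii $r_i = |\xi_i|$, one has $|a_i - c| = r_i$; since $rv(a_1 - c) = \xi_1 \neq \xi_2 = rv(a_2 - c)$, no cancellation occurs and the ultrametric law gives $|a_1 - a_2| = \max\{r_1, r_2\}$ in every case. As $f$ is a risometry, $|f(a_1) - f(a_2)| = |a_1 - a_2| = \max\{r_1, r_2\}$, which is exactly the hypothesis of Proposition~\ref{1-cell}. Hence $f(C)$ is a $0$-definable cell; denote by $d$ its $0$-definable center, and note that $d \notin f(C)$, so $\tilde f$ really adjoins a single new point to the graph of $f$.

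It remains to show that $\tilde f$ is a risometry onto its image; since $f$ already is, the only new instances to check are $rv(f(x) - d) = rv(x-c)$ for $x \in C$. Preservation of radii yields $|f(x) - d| = |x-c|$ immediately, so the real content --- and the main obstacle --- is to match the \emph{angular} $RV$-data and not merely the absolute values, which amounts to choosing the center $d$ correctly. To this end I would consider, for each $\xi \in R$, the $0$-definable set
$$ D_\xi := \{ w \in K : rv(f(x) - w) = \xi \ \text{ for all } x \in B_\xi \}, $$
an open ball of radius $|\xi|$. Using the risometry together with the dominant-term rule one checks that $|\xi'| < |\xi|$ forces $D_{\xi'} \subseteq D_\xi$: for $x \in B_\xi$, $x' \in B_{\xi'}$ one has $rv(x - x') = \xi$, hence $rv(f(x) - f(x')) = \xi$, while any $w \in D_{\xi'}$ satisfies $|f(x') - w| = |\xi'| < |\xi|$, so $rv(f(x) - w) = rv(f(x) - f(x')) = \xi$. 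Thus $\{D_\xi\}$ is a chain of balls, and by spherical completeness (valid since the theory is $0$-h-minimal) their intersection is non-empty; any point of $\bigcap_\xi D_\xi$ satisfies $rv(f(x) - d) = rv(x-c)$ for every $x$.

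Finally, this intersection is precisely the ball of admissible centers of the cell $f(C)$, so I may take $d$ to be the $0$-definable center furnished by Proposition~\ref{1-cell}, which lies in it; with this choice $\tilde f$ is the desired risometry. All data produced are $0$-definable, and the passage to a $\xi$-definable risometry is immediate, since Hensel minimality permits adding constants, exactly as in Proposition~\ref{1-cell}.
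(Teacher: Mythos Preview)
Your argument follows the paper's route --- verify the metric hypothesis for the family $\{f(B_\xi)\}$ and invoke Proposition~\ref{1-cell} --- and you supply considerably more detail on the risometry extension than the paper's one-line proof, which simply says ``hence and by Proposition~\ref{1-cell}''. Most of that extra detail is sound, but two points need attention.

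A minor omission: you prove $D_{\xi'}\subseteq D_\xi$ only for $|\xi'|<|\xi|$, yet to call $\{D_\xi\}$ a chain you must also handle $|\xi'|=|\xi|$ with $\xi'\neq\xi$. This is easy: for $x\in B_\xi$, $x'\in B_{\xi'}$ and $w\in D_{\xi'}$ one has
\[
\bigl|(f(x)-w)-(x-c)\bigr|\le\max\bigl\{\,|(f(x)-f(x'))-(x-x')|,\ |(f(x')-w)-(x'-c)|\,\bigr\}<|\xi|,
\]
so $rv(f(x)-w)=rv(x-c)=\xi$ and in fact $D_{\xi'}=D_\xi$ whenever $|\xi'|=|\xi|$.

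The more serious issue is your final step. You assert that $\bigcap_\xi D_\xi$ is ``precisely the ball of admissible centers of $f(C)$'' and conclude that the $0$-definable center produced by Proposition~\ref{1-cell} must lie in it. This identification is false in general. Already when $R=\{\xi_0\}$ is a singleton, $f(C)$ is a single open ball of radius $|\xi_0|$; \emph{every} point at distance exactly $|\xi_0|$ from it is an admissible center, and these fill the whole annulus (closed $|\xi_0|$-ball minus $f(C)$), while $D_{\xi_0}$ is only one particular open sub-ball of that annulus --- the one where $rv(f(x)-d)$ equals $\xi_0$ itself rather than some other class of the same absolute value. Proposition~\ref{1-cell} yields \emph{some} $0$-definable center, with no guarantee that it lands in $\bigcap_\xi D_\xi$; hence the risometry conclusion does not follow from what you have written. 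Your spherical-completeness argument establishes only that $\bigcap_\xi D_\xi$ is nonempty; what is still required is a $0$-definable point inside this $0$-definable ball, and that needs a separate argument.
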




\begin{corollary}\label{r2}
Let $f:C \to K$ be a 0-definable risometry of a 0-definable reparametrized open cell $C$, with a skeleton $S(C)$, onto its image. (By Observation~\ref{cell-fin}, $C$ is actually a finite disjoint union of open cells $C_{i}$, $i=1,\ldots,d$.) Then the image $D:= f(C)$ is a 0-definable reparametrized open cell which is the finite disjoint union of open cells
$$ D_{i} := f(C_{i}), \ \ i=1,\ldots,d. $$
Further $f$ induces a 0-definable function 
$$ f_{s}: S(C) \to S(D) $$
between the skeletons of $C$ and $D$, respectively, and the function
$$ \tilde{f}(x) = \left\{ \begin{array}{cl}
                        f(x) & \mbox{ if } \ x \in C, \\
                        f_{s}(x) & \mbox{ if } \ x \in S(C),
                        \end{array}
               \right.
$$
is a risometry of $C \cup S(C)$ onto $D \cup S(D)$ too.
\end{corollary}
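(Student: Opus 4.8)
The plan is to reduce to the single-cell case already settled in Corollary~\ref{r1}, and then to carry the skeleton construction through $f$.

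\emph{Step 1: structure of the image.} Using Proposition~\ref{cell-fin}, write $C = \bigsqcup_{i=1}^{d} C_{i}$ with each $C_{i}$ an open cell, say $\xi_{i}$-definable with center $c_{i}$, and apply Corollary~\ref{r1} to the $\xi_{i}$-definable risometry $f|C_{i}$. This gives that $D_{i} := f(C_{i})$ is a $\xi_{i}$-definable open cell with a $\xi_{i}$-definable center $d_{i}$, and that the map $\tilde{f}_{i}$ extending $f|C_{i}$ by $c_{i} \mapsto d_{i}$ is a risometry onto $D_{i} \cup \{ d_{i} \}$. Since $f$ is injective, $D = f(C) = \bigsqcup_{i} D_{i}$; and the family $(D_{i})_{i}$ is 0-definable because $(C_{i})_{i}$ and $f$ are, so $D$ — with the reparametrizing map recording the piece and its parameter — is a 0-definable reparametrized open cell. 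Finally, from $rv(f(x) - d_{i}) = rv(x - c_{i})$ on $C_{i}$ we get $\{ |y - d_{i}| : y \in D_{i} \} = \{ |x - c_{i}| : x \in C_{i} \}$, hence $\rho(d_{i}) = \rho(c_{i})$, as elements of $|K|$ or as Dedekind cuts when the infima do not exist (the argument being the same in that case).

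\emph{Step 2: the skeletons.} By Remark~\ref{configuration}, the skeleton is a fixed formula, built from arithmetic averages together with a choice made at each level, depending only on the configuration of $(c_{i}, \rho(c_{i}))_{i}$; so I would show that $f$ carries this configuration onto that of $(d_{i}, \rho(d_{i}))_{i}$. The $\rho$-values match by Step~1. For the ultrametric relations among the centers, the ``isosceles triangle'' device applies: if $r < |c_{i} - c_{j}|$ for a radius $r$ occurring in the construction, choose $x \in C_{i}$, $y \in C_{j}$ with $|x - c_{i}|, |y - c_{j}| < |c_{i} - c_{j}|$, so $|x - y| = |c_{i} - c_{j}|$ and hence $rv(d_{i} - d_{j}) = rv(f(x) - f(y)) = rv(x - y) = rv(c_{i} - c_{j})$; running the same argument for the risometry $f^{-1}$ gives the reverse comparisons, and therefore the partition of the centers into equivalence classes at every level is preserved. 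For the choice inside one class, its centers lie in a single ball $B^{*}$ of radius $r_{k}$, so their average lies in $B^{*}$ too, and ``the average lies in a ball of radius $r_{k}$ from the cell $C_{i_{0}}$'' is then equivalent to ``$C_{i_{0}}$ attains its infimum $\rho(c_{i_{0}}) = r_{k}$'' — an intrinsic metric feature of $(C_{i_{0}}, c_{i_{0}})$, transported by Step~1. Hence $S(D)$ is given by the same formula applied to the $d_{i}$'s; this produces a canonical bijection $f_{s} : S(C) \to S(D)$, which is 0-definable (both skeletons are 0-definable and the correspondence is formulaic) and is the unique extension of $f$ to the skeletons respecting the cell structure.

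\emph{Step 3: $\tilde{f}$ is a risometry.} It remains to verify $rv(\tilde{f}(u) - \tilde{f}(v)) = rv(u - v)$ for $u, v \in C \cup S(C)$. For $u, v \in C$ it is the hypothesis on $f$. For $u = x \in C_{i}$, with $C_{i}$ of modified center $s_{i}$ and image center $d_{i} = f_{s}(s_{i})$, and $v = s \in S(C)$: if $s = s_{i}$ the equality is Corollary~\ref{r1}; if $s \neq s_{i}$ then $|s_{i} - s| > \max\{ \rho(s_{i}), \rho(s) \}$ by condition~\ref{skeleton}, and a split according to whether $|x - s_{i}|$ is $<$, $=$ or $>$ $|s_{i} - s|$ reduces the claim, via $rv(f(x) - d_{i}) = rv(x - s_{i})$ and $rv(d_{i} - f_{s}(s)) = rv(s_{i} - s)$ from Step~2, to the ultrametric identity for $rv$. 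The case $u, v \in S(C)$ is handled by the identity $rv(d_{i} - d_{j}) = rv(s_{i} - s_{j})$ for cells of modified centers $s, s'$, and surjectivity onto $D \cup S(D)$ then follows. The main obstacle is Step~2 — proving that \emph{every} ingredient of the canonical skeleton construction, notably the representative chosen within each equivalence class, is encoded in metric data that $f$ preserves; the $rv$-bookkeeping in Step~3 at the boundary distances between a cell and a skeleton point, where condition~\ref{skeleton} enters decisively, is a close second.
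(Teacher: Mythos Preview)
Your approach is the same as the paper's --- apply Corollary~\ref{r1} to each $C_i$, then argue that the skeleton construction transfers. The paper's own proof is just two lines (``the first conclusion follows from Corollary~\ref{r1}, the second is a straightforward consequence of the very construction of skeleton''), so you have done considerably more than the paper spells out.

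That said, one step in your Step~2 is not right. You assert that ``the average lies in a ball of radius $r_k$ from the cell $C_{i_0}$'' is equivalent to ``$C_{i_0}$ attains its infimum $\rho(c_{i_0})=r_k$''. This is false: attaining the infimum only says that \emph{some} $\lambda\in R_{i_0}$ has $|\lambda|=r_k$, whereas the average $c$ lying in such a ball requires the \emph{specific} value $rv(c-c_{i_0})$ to belong to $R_{i_0}$. Since the centers within an equivalence class are at mutual distance $\le r_k$ while $\rho=r_k$, your isosceles device cannot reach inside the class to show $rv(c_{i}-c_{j})=rv(d_{i}-d_{j})$ there, and hence you have not shown that the averages on the two sides sit in corresponding positions relative to the cells. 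So the particular representative chosen by the construction may differ on the $C$ and $D$ sides.

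This gap is not fatal to the corollary, but it means your justification for ``the same formula applies'' and for uniqueness of $f_s$ needs repair. The cleanest fix is to drop the claim that the \emph{choice rule} is mirrored and instead define $f_s$ level by level via the bijection of equivalence classes; then use the skeleton condition~\ref{skeleton} (distances between distinct skeleton points exceed the relevant $r_k$) to see that, for the risometry check in Step~3, every comparison one actually needs involves distances strictly larger than $r_k$, where the precise location of $s$ and $f_s(s)$ inside their respective closed $r_k$-balls is irrelevant to $rv$. That is the content the paper is gesturing at with ``straightforward consequence of the construction''.
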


\begin{proof}
While the first conclusion follows directly from Corollary~\ref{r1}, the second one is a straightforward consequence of the very construction of a skeleton.
\end{proof}

\vspace{1ex}

Now we turn to the extension problem. The function $f: C \to K$ under study is a risometry onto its image. Consider its extension
$$ \tilde{f}: C \cup S(C) \to D \cup S(D) $$
from Corollary~\ref{r2}. Then the function
$$ F(x) = \left\{ \begin{array}{cl}
                        \tilde{f}(x) & \mbox{ if } \ x \in C \cup S(C), \\
                        \frac{\sum \tilde{f}(S(x))}{\# \, S(x)} & \mbox{ otherwise, }
                        \end{array}
               \right.
$$
where
$$ S(x) := \{ a \in S(C): \ |x-a| = \min \, \{ |x-s|: \ s \in S(C) \} \}, $$
is a 0-definable 1-Lipschitz extension of $f$ we are looking for.
  \hspace*{\fill} $\Box$

\vspace{2ex}

The above extension procedure can be split into the following two steps, which will be especially convenient for higher dimensional ambient spaces. First observe that the function
$$ g: K \ni x \mapsto \frac{\sum \tilde{f}(S(x))}{\# \, S(x)} \in K $$
is 1-Lipschitz continuous, and next that so is the extension of
$$ \tilde{f}-g|(C \cup S(C)) $$
by zero through the complement $K \setminus (C \cup S(C))$.



\vspace{2ex}


\section{Lipschitz extension on the affine plane}

Consider a 0-definable 1-Lipschitz function $f:A \to K$ with $A \subset K^{2}$.
By the LE-property, we can assume that $A =C$ is a 0-definable re\-para\-metrized Lipschitz cell of dimension $k=0$, $k=1$ or $k=2$.

\vspace{2ex}

{\bf Case~I.} When $k=\dim C =0$, the set $A$ has a finite number $d$ of points, say
$$ A = \{ (u_{i},v_{i,j}) \in K^{2}: \ i=1,\ldots,s, \ j=1,\ldots,d_{i} \}. $$
Denote by
$$ A_{u_{i}} := \{ v_{i,1},\ldots, v_{i,d_{i}} \} $$
the fiber of $A$ over $u_i$. Even this case is a bit laborious. Put
$$ \delta_{1} := \min \{ |u_i - u_j|: \ i \neq j \}, \ \ \delta_{2} := \min \{ |u_i - u_j|: \ |u_i - u_j| > \delta_{1} \}, $$
and so on. In this fashion, we get a finite increasing sequence
$$ \delta_{1} < \delta_{2} < \ldots < \delta_{t} < \delta_{t+1}= \infty, \ \ t \leq s. $$

We shall canonically extend the function $f$ on the successive $\delta_{k}$-neighbourhoods $A^{\delta}$ of $A$, where
$$ A^{\delta} := \bigcup_{x \in A} \, B(x,\delta), \ \ B(x,\delta) := \{ y \in K^{2}: \ |y-x| < \delta \}. $$

Observe that if $\Phi: E \to K$ is a 1-Lipschitz function on a finite subset $E$ of $K$, then so is
its extension
\begin{equation}\label{ext-fin}
\omega(\Phi): K \ni v \mapsto \frac{\sum f(E(v))}{\# \, E(v)} \in K,
\end{equation}
where
$$ E(v) := \{ w \in E: \ |v-w| = \min \, \{ |v-s|: \ s \in E \} \}, $$
is a 1-Lipschitz extension of $\phi$; this is just formula~\ref{ext-fin-0}.

\vspace{1ex}

For 1-Lipschitz functions $\phi_{i}: E_{i} \to K$ on finite sets $E_{i} \subset K$, $i=1,2,\ldots,p$ and $\delta \in |K|$, define the function
$$ \Phi = \Phi(\phi_{1},\delta, \phi_{2},\ldots, \phi_{p}): \, \bigcup_{i=1}^{p} E_{i} \to K $$
by putting
$$ \Phi(v) = \phi_{1}(v) \ \ \text{if} \ \ v \in E_{1}, $$
and
$$ \Phi(v) = \frac{\sum_{i=2}^{p} \sum \phi_{i}(B \cap E_{i})}{\sum_{i=2}^{p} \# \, (B \cap E_{i})} \ \ \text{if} \ \ v \in B \cap \bigcup_{i=2}^{p} E_{i}, $$
for any open ball
$B \subset \bigcup_{i=2}^{p} \, E_{i}^{\delta} \setminus E_{1}^{\delta}$
of radius $\delta$.

\vspace{1ex}

We begin by extending a 0-definable 1-Lipschitz function $f:A \to K$ from $A$ on the $\delta_{1}$-neighbourhood of $A$. Consider any maximal subset $B_{1}$ of $\pi_{<2}(A) = \{ u_{1}, \ldots, u_{s} \}$ that satisfies the following condition
$$ (\Lambda_{1}) \ \ \ |u-u'|= \delta_{1} \ \ \text{for every} \ u,u' \in B, \ u \neq u'; $$
say, $B_{1} = \{ u_1,\ldots,u_p \}$. Consider the functions
$$ \phi_{i}: A_{u_{i}} \to K, \ \ \phi_{i}(v_{i,j}) := f(u_{i},v_{i,j}), \ \ i=1,\ldots,p, \ j=1,\ldots,d_{i} $$
and  
$$ \Phi = \Phi(\phi_{1},\delta_{1},\phi_{2},\ldots,\phi_{p}). $$
Put
$$ \tilde{f}_{B_{1},1}(u,v) = \omega(\Phi)(v), \ \ (u,v) \in \bigcup_{k=1}^{p} \bigcup_{j=1}^{d_{k}} \, B(u_{1},\delta_{1}) \times B(v_{k,j},\delta_{1}); $$
note that the function $\tilde{f}_{B_{1},1}(u,v)$ is in fact constant with respect to the variable $u \in B(u_{1},\delta_{1})$.

\vspace{1ex}

In a similar fashion, we can canonically define the functions
$$ \tilde{f}_{B_{1},i}(u,v) = \omega(\Phi)(v), \ \ (u,v) \in \bigcup_{k=1}^{p} \bigcup_{j=1}^{d_{i}} \, B(u_{i},\delta_{1}) \times B(v_{k,j},\delta_{1}), $$
and next the function $\tilde{f}_{B_{1}}$ by gluing the functions $\tilde{f}_{B_{1},1}, \ldots, \tilde{f}_{B_{1},p}$ just constructed. Eventually, we can canonically define a function $\tilde{f}_{1}$ by gluing the functions $\tilde{f}_{B_{1}}$ where $B_{1}$ runs over all maximal sets that satisfy condition~$\Lambda_{1}$.
It is not difficult to check that all the functions $\tilde{f}_{B_{1}}$ are 1-Lipschitz on the $\lambda_{1}$-neighbourhood of the finite set
$$ A(B_{1}) := B_{1} \times \{ v_{i,j}: \ i=1,\ldots,p, \ j=1,\ldots,d_{i} \}, $$
and $\tilde{f}_{1}$ is a 0-definable 1-Lipschitz extension of $f$ to the $\delta_{1}$-neighbour\-hood of a finite subset of $K^2$ containing $A$.

\vspace{1ex}

Next consider any maximal subset $B_{2}$ of $\pi_{<2}(A) = \{ u_{1}, \ldots, u_{s} \}$ that satisfies the following condition
$$ (\Lambda_{2}) \ \ \ |u-u'| \leq \delta_{2} \ \ \text{for every} \ u,u' \in B, \ u \neq u'. $$
Every such subset $B_{2}$ is a union of subsets $B_{1}$ considered above. Since the sets $A(B_{1})$ have constant fibres over the points of $B_{1}$, we can now handle subsets $B_{1}$ of the sets $B_{2}$ similarly as we did points of the sets $B_{1}$ before. In this fashion, again we can canonically achieve 1-Lipschitz extensions $\tilde{f}_{B_{2}}$ of
$\tilde{f}_{1}$ on the $\lambda_{2}$-neighbourhood of a finite set $A(B_{2})$ with constant fibres over the points of $B_{2}$, and then a unique 0-definable 1-Lipschitz extension $\tilde{f}_{2}$ of $f$ to the $\delta_{2}$-neighbourhood of a finite subset of $K^2$ containing $A$.

\vspace{1ex}

Repeating the above, we eventually achieve a 0-definable 1-Lipschitz extension $\tilde{f}_{t+1}$ of $f$ to the affine space $K^{2}$, we are looking for.
   \hspace*{\fill} $\Box$



\begin{remark}\label{config-ext}
There are finitely many configurations of $d$ points of the graphs of 1-Lipschitz 0-definable functions $f$ on finite sets $A$, based on the distances between them, such that for each of which the foregoing canonical extension procedure yields a 1-Lipschitz 0-definable extension of $f$ given by a unique formula depending on those points. In a similar fashion, this procedure carries over to the canonical Lipschitz extension from finite 0-definable subsets $A$ of the affine spaces $K^{n}$.
\end{remark}


\vspace{2ex}

{\bf Case~II.} Suppose $k=\dim C =1$. Making use of Corollary~\ref{LEP} and Proposition~\ref{graph}, we can assume that $C$ is a disjoint union of the form
$$ C = \bigcup_{\xi \in \Xi} \, \bigcup_{j=1}^{d} \, \mathrm{graph}\, \phi_{\xi,j} \subset K^2, $$
where
$$ D = \bigcup_{\xi \in \Xi} \, D_{\xi} \subset K^{1}_{x_{1}} $$
is a 0-definable reparametrized open cell in $K_{x_{1}}$. We can of course assume that $D$ is a 0-definable open cell package with a skeleton
$$ S(D) = \{ c_1, \dots, c_s \} \subset K_{x_1}. $$
The construction of centers in the definition of a skeleton and a package, introduced in Case III considered below in this section, can be repeated almost verbatim to the above family of functions. We can thus assume that $C$ is a 0-definable package which consists of finitely many graphs of functions
$$ \mathrm{graph}\, \phi_{j}(x_{1}), \ \ j=1,\ldots,d, $$
defined on $D$ such that every function $\phi_{j}$ is 1-Lipschitz on each ball of $D$.
The union of images
$$ \bigcup_{j=1}^{d} \, \phi_{j}(D) \subset K_{x_{2}} $$
is the union of a 0-definable open cell package $E$, with a skeleton
$$ S(E) = \{ d_1, \dots, d_t \} \subset K_{x_2}, $$
and of a finite number of points $d_{t+1},\ldots,d_{u} \in K_{x_2}$.

\vspace{1ex}

As explained in Section~1 after Theorem~\ref{ext}, we can require that, after a suitable modification, $f$ satisfy condition~\ref{risometry} with respect to the variable $x_{1}$. In other words, we can require that the function $f(x_{1},\phi_{j}(x_{1}))$ be a risometry for each $j=1,\ldots,d$. The image $f(C)$ is the union of a 0-definable open cell package $H$ with a skeleton $S(H) = \{ a_1, \dots, a_p \}$ and of a finite number of points.


\vspace{1ex}

First, we shall extend $f$ on the set $P$ of relevant pairs $(c_{i},d_{j})$. Clearly,
$$ D_{i} := \{ x \in D: \ |x_{1} - c_{i}| < \min \{ |c_{j} - c_{i}|: \; j \neq i \} \}, \ \ i=1,\ldots,s, $$
is a family of non-empty sets. For any fixed $i$, the union of images
$$ \bigcup_{j=1}^{d} \, \phi_{j}(D_{i}) \subset K_{x_{2}} $$
is the union of an open cell package $E_{i}$, whose skeleton $S(E_{i})$ is a subset of $S(E)$, and a finite number of points from among $d_{t+1},\ldots,d_{u} \in K_{x_2}$.

\vspace{1ex}

For a fixed $d_{j} \in S(E_{i})$, take a ball $B$ from $E_{i}$ that is next only to $d_{j}$ and a ball $B'$ from $D_{i}$ that meets one of the pre-images $\phi_{j}^{-1}(B)$, $j=1,\ldots,d$.
Since the centers under study are skeletons and $f$ is a risometry with respect to $x_{1}$, the image
$$ f(\mathrm{graph}\, \phi_{j}|B') $$
is next to a unique center from the skeleton $S(H)$, say $a_{k}$. It is not difficult to check that $a_{k}$ is independent of the choice of $B'$. We can thus extend $f$ to a 1-Lipschitz function by putting
$$ \tilde{f}(c_{i},d_{j}) := a_{k}, \ \ d_{j} \in S(E_{i}). $$
Similarly, for a relevant point $d_{j}$ with $j \in \{ t+1,\ldots,u \}$, we put
$$ \tilde{f}(c_{i},d_{j}) := a_{k} $$
when the image
$$ f(\mathrm{graph}\, \phi_{j}^{-1}(d_{j})) $$
is next to a unique center $a_{k} \in S(H)$.

\vspace{1ex}

But we have already considered the case where $A$ is a subset of the affine plane of dimension $0$ (Case~I). Therefore there exists a 0-definable 1-Lipschitz function $g:K^2 \to K$ that agrees with the function $f$ on the set $P$ of the relevant pairs $(c_{i},d_{j})$. This reduces the problem to the case where $f$ is a 1-Lipschitz function on $C \cup P$ which vanishes on $P$.
Now let
$$ f_{x_{1}}: C_{x_{1}} \ni x_{2} \mapsto f(x) \in K, \ \ x_{1} \in D, $$
be the restriction of $f$ to the fiber $C_{x_{1}}$ of $C$ over $x_{1}$.
Then the function $F: K^{2} \to K$ given by the formula
$$ F(x) = \left\{ \begin{array}{cl}
                        \omega(f_{x_{1}})(x_{2}) & \mbox{ if } \ x_{1} \in D, \\
                        0 & \mbox{ if } \ x_{1} \not \in D,
                        \end{array}
               \right.
$$
is a 1-Lipschitz extension of $f$ we are looking for; here the function $\omega(\Phi)$ is given by formula~\ref{ext-fin}. Indeed, consider two points $x,y \in K^{2}$.
Suppose $x_{1}$ and $y_{1}$ lie in distinct balls of the package $D$ with centers $c_{i},c_{l} \in S(D)$, respectively; equality $i=l$ is allowed. Then
$$ |x-y| \geq |x_{1} - y_{1}| \geq \max \, \{ |x_{1} - c_{i}|, |y_{1} - c_{l}| \}, $$
and hence the assertion follows immediately. The case where one of the points does not lie in $D$ is similar. Finally, if $x$ and $y$ lie in a common ball of $D$, the assertion follows from that all the functions $\phi_{1},\ldots,\phi_{d}$ are 1-Lipschitz on every ball of $D$. This solves Case~II.    \hspace*{\fill} $\Box$



\vspace{2ex}

{\bf Case~III.} Finally, suppose $k = \dim C =2$, i.e.\ $C$ is a 0-definable reparametrized open cell in $K^2$:
$$ C = \bigcup_{\xi} \, C_{\xi} \subset K^2, \ \  C_{\xi} = \sigma^{-1}(\xi), \ \ \xi \in \sigma(C),  $$
where $\sigma: C \to RV(K)^{s}$ is a 0-definable function. Every set
$$ C_\xi := \sigma^{-1}(\xi), \ \ \xi \in \sigma(C), $$
is a $\xi$-definable Lipschitz open cell with some center tuple $c_{\xi}$ of the form
\begin{equation}\label{cell-open}
  C_{\xi} = \left\{ x \in K^{2}: (rv(x_{1} - c_{\xi,1}), rv(x_{2} - c_{\xi,2}(x_{1}))) \in R_{\xi} \right\}
\end{equation}
with some 0-definable families of center tuples $c_{\xi} = (c_{\xi,1},c_{\xi,2})$ and of sets
$R_{\xi} \subset G(K)^{2}$.

\vspace{1ex}

We begin by defining {\em a skeleton} of the repara\-me\-trized Lipschitz open cell $C$. It will be done via a suitable finite 0-definable partitioning of the set
$$ D := \pi_{1}(C) = \bigcup_{\xi} \, D_{\xi}, \ \ \text{where} \ \ D_{\xi} := \pi_{1}(C_{\xi}) \subset K_{x_{1}}, $$
up to a subset of lower dimension, which will not affect our solution to the extension problem in view of Corollary~\ref{LEP} and the induction hypothesis. The restriction
$$ p: \bigcup_{\xi} \, \mathrm{graph} (c_{\xi,2}) \to D $$
of the projection $\pi_{1}$  has of course finite, thus uniformly bounded fibres; say, of maximum cardinality $l$.
Since $D$ is the union of the interiors $\mathrm{int} (D_{d})$ of the sets
$$ D_{d} := \{ x_{1} \in D: \; \# \, p^{-1}(x_{1})= d \}, \ \ d \leq l, $$
and of a (finite) set of dimension $< 1$, we can assume that $D = \mathrm{int} (D_{d})$, and thus, as before in Case~II, that the constant cardinality of the fibers $p^{-1}(x_{1}) = C_{x_{1}}$ is $d$ for all $x_{1} \in D$. Then the 0-definable family
$$ D_{\bar{\xi}} := \left\{ x_{1} \in \bigcap_{j=1}^{d} \, D_{\xi_{j}}: \ \# \, \{ c_{\xi_1,2}(x_{1}),\ldots,c_{\xi_d,2}(x_{1}) \} = d \right\}, $$
where $\bar{\xi} = (\xi_1,\ldots,\xi_d) \in \sigma(C)^{d}$, covers $D$. Indeed, we have
$$ D_{\lambda} = \bigcup_{\lambda \in \bar{\xi}} \, D_{\bar{\xi}} \ \ \text{for every} \ \ \lambda \in \sigma(C). $$


\vspace{1ex}

Next, we wish to find a certain special 0-definable cell decomposition of $D$, which is finer than the covering $\{ D_{\bar{\xi}}: \ {\bar{\xi}} \in \sigma(C)^{d} \}$. To this end, consider the following two 0-definable families $\Lambda$ and $\Theta$ with parameter $a \in D$ and fibres in the auxiliary sort $RV$:
$$ \Lambda_{a} := \{ \bar{\xi} \in \sigma(C)^{d}: \, a \in D_{\bar{\xi}} \} \subset \sigma(C)^{d}  $$
and
$$ \Theta_{a} := \{ (\bar{\xi}, rv(a-c_{\xi_{1}}), \dots, rv(a-c_{\xi_{d}})): \, \bar{\xi} = (\xi_{1},\ldots,\xi_{d}) \in \Lambda_{a} \} $$
$$  \subset \sigma(C)^{d} \times G(K)^{d}. $$
Next define the following 0-definable equivalence relation on $D$:
$$ R(a,b) \ \ \Longleftrightarrow \ \ \Theta_{a} = \Theta_{b}, \ \ a,b \in D, $$
which induces the 0-definable partition of $D$ into its equivalence classes
$$ T_{a} = \{ b \in D: \ \Theta_{a} = \Theta_{b} \}, \ \ a \in D. $$
Now, as explained in Remark~\ref{configuration} and Observation~\ref{config-code}, it follows from Theorem~\ref{cell-decomposition} and Lemma~\ref{rem-stable} that there exists a finite decomposition of $D$ into 0-definable reparametrized Lipschitz cells, which is compatible both with the above partition and with the configurations $\mathfrak{C}(C_{a},\bar{\xi}(a))$, i.e.\ both the fibers $\Theta_{a}$ and configurations $\mathfrak{C}(C_{a},\bar{\xi}(a))$ with respect to the construction of a skeleton on the affine line, $\bar{\xi}(a) \in \Lambda_{a}$, are constant over each twisted box $B$ (here, ball in the affine line) of those cells.

\vspace{1ex}

By the LE-property and induction hypothesis, throwing away pieces of lower dimension, we can assume that $D$ is one of those reparametrized open cell. It follows immediately from the very construction that each such ball $B$ lies in a unique equivalence class $T_{a}$, and that the centers $c_{\xi_{i},1}(x_{1})$ are 1-Lipschitz on $B$ for every $\bar{\xi} = (\xi_{1},\ldots,\xi_{d}) \in \Lambda_{a}$.
Hence and by the constancy of fiber configuration, the punctual construction of a skeleton from Section~4, performed here over parameters $x_{1} \in B$, leads to $e$ new centers
$$ \tilde{c}_{1,2}(x_{1}),\ldots, \tilde{c}_{e,2}(x_{1}) $$
with a number $e \leq d$ independent of $x_{1}$, which are produced from the initial $d$ centers by the same formulae and which are 1-Lipschitz too. Indeed, the new centers of the skeleton are built as arithmetic means of a number of the initial ones.

\vspace{1ex}

We call the graphs of those new centers (or their union) {\em a skeleton} of $C$ at level 2 (i.e.\ with respect to the variable $x_2$). Similarly as before on the affine line, we can actually assume that the reparametrized cell $C$ has fibers just of constant cardinality $e$ over $D$. For simplicity, we drop the tilde sign.

\vspace{1ex}

Further, we can also assume that $D$ itself is a 0-definable package determined by a family
$$ (c_{1,1},R_{1}),\ldots,(c_{q,1},R_{q}). $$
Summing up, the skeleton consists of the same number $e$ of graphs of new centers that are 1-Lipschitz over each twisted box (here, ball in the affine line) of the package $D$.
We thus get the following presentation
$$   C = \bigcup_{i=1}^{q} \bigcup_{j=1}^{e} \, C_{i,j}, \ \ \ C_{i,j} := \bigcup_{\lambda_{1} \in R_{i}} \, C_{i,j,\lambda_{1}}, $$
for the 0-definable family
$$ C_{i,j,\lambda_{1}} := \{ x \in K^{2}: \ rv(x_{1} - c_{i,1}) = \lambda_{1}, \ rv(x_{2} - c_{j,2}(x_{1}))) \in R_{i,j,\lambda_{1}} \}, $$
where $R_{i,j,\lambda_{1}}$ is defined by the formula
$$ \{ \lambda_{2} \in G(K): \ \ \exists \; a \in D \ \exists \; \xi \in \bar{\xi} \in \Lambda_{a} \ \ [ rv(a-c_{i,1}) = \lambda_{1}, \ (\lambda_{1},\lambda_{2}) \in R_{\xi} ] \}; $$
independence from the point $a$ follows directly from that the package $D$ is compatible with the partition $\Theta$.

\vspace{1ex}

We shall then say that the reparametrized cell $C$ is a 0-definable open cell package determined by the finitary skeleton
$$ (c_{i,1}), \ (c_{j,2}(x_{1})), \ \ i=1,\ldots,q, \ j=1,\ldots,e, $$
and the infinitary part
$$ (R_{i},  R_{i,j,\lambda_{1}}), \ \ i=1,\ldots,q, \ j=1,\ldots,e, \ \lambda_{1} \in R_{i}; $$
the latter can be expressed equivalently by the 0-definable family
$$ R_{i,j} := \{ \lambda =(\lambda_{1},\lambda_{2}) \in G(K)^{2}: \ \lambda_{2} \in R_{i,j,\lambda_{1}} \} \subset G(K)^{2} $$
with $i=1,\ldots,q$, $j=1,\ldots,e$. Hence we get
$$ C_{i,j} = \{ x \in K^{2}: \ (rv(x_{1} - c_{i}),rv(x_{2} - c_{j}(x_{1}))) \in R_{i,j} \}. $$
Furthermore, each center $c_{j,2}$ is 1-Lipschitz on every twisted box (here ball) of the package $D$.

\vspace{1ex}

By the LE-property and induction hypothesis, the extension problem in Case~III comes down, via throwing away pieces of lower dimension, to extending Lipschitz 0-definable functions from such a 0-definable open cell package.  

\begin{remark}\label{inter-rem}
The above constructions of a total skeleton and a 0-definable open cell packages (with relevant twisted boxes, and not only with balls as in the planar case), can be recursively repeated for 0-definable repara\-metrized open cells in the affine spaces $K^k$. In the next section, this will be applied to the extension problem on the affine spaces $K^k$.
\end{remark}

As indicated before, we can assume without loss of generality that the function $f$ satisfies condition~\ref{risometry} with respect to the variable $x_{2}$.  
Hence and by Corollary~\ref{r2}, the images
\begin{equation}\label{center-family}
 f(C_{i,j} \cap (\{ x_{1} \} \times K)), \ \ x_{1} \in D_{i},
\end{equation}
form a family of cells. Since the defining formulae for their centers depend on configurations rather than on parameters (canonical character of the construction of a skeleton), this is,
by model theoretical compactness, a 0-definable family of cells with some 0-definable family of centers
$$ d_{i,j}(x_{1}), \ \ i=1,\ldots,q, \ j=1,\ldots,e. $$
At this stage, our objective is to extend $f$ on the graphs of the centers $c_{1,2},\ldots,c_{e,2}$.

\vspace{1ex}

To this end, consider the following package property
\begin{equation}\label{package-part-0}
R_{i,j,\lambda_{1}} \subset \{ \lambda_{2} \in G(K): \ |\lambda_{2}| \leq |\lambda_{1}| \} \ \ \text{for all} \ i,j, \lambda_{1}.
\end{equation}


\vspace{1ex}

In order to ensure this package property, we can, without loss of generality due to the LE-property, partition the package $C$ into two 0-definable open cell packages $C'$ and $C''$ via modification of the sets $R_{i,j,\lambda_{1}} \subset G(K)$ by putting
$$ R_{i,j,\lambda_{1}}' := \{ \lambda_{2} \in R_{i,j,\lambda_{1}}: \ |\lambda_{2}| \leq |\lambda_{1}| \} $$
and
$$ R_{i,j,\lambda_{1}}'' := \{ \lambda_{2} \in R_{i,j}: \ |\lambda_{2}| > |\lambda_{1}| \}. $$
The package $C'$ satisfies of course property~\ref{package-part-0}.

\vspace{1ex}

On the other hand, observe that the package $C''$ has constant fibres over each twisted box (here ball) of the package $D''$. To solve this case, we use the induction hypothesis (uniform version) on the dimension $n$ of the ambient space, applied to the centers of the package $C$. Without loss of generality, we can thus assume that the restrictions $c_{j,2}|D_{\lambda_{1}}$ of the centers of the package, are global 1-Lipschitz functions on $K$.
Obviously, they form a 0-definable family of functions (denoted by the same letters for simplicity):
$$ c_{j,2,\lambda_{1}}: K \to K, \ \ i= 1,\ldots, q, \ j =1, \ldots, e, \  \lambda_{1} \in R_{i}. $$
Hence we obtain a finite 0-definable family of the origins
$$ \mathfrak{O} := \{ (c_{i,1},c_{j,2,\lambda_{1}}(c_{i,1})): \ i,j,\lambda_{1} \} = \{ O_{1},\ldots, O_{s} \} \subset K^{2}. $$
It is not difficult to check, after swapping the roles of variables $x_{1}$ and $x_{2}$, that $C''$ is a 0-definable open cell package with 0-definable family of constant centers of the form:
$$ c''_{1} \in K_{x_{2}}, \ \  c''_{2}(x_{2}) = c''_{2} = c_{i,1} \ \ \text{with some} \ \  (c''_{2},c''_{1}) \in \mathfrak{O}. $$
With the variables $x_{1},x_{2}$ swapped in this way, the package $C''$ satisfies property~\ref{package-part-0} too.


\begin{remark}\label{reorder}
Such an exchange of variables, or rather reordering the variables $x_{1},\ldots,x_{n}$, will be applied in the higher dimensional case in order to ensure the analogue of package property~\ref{package-part-0} for dimension $n$.
\end{remark}

Observe now that the very definition of an open cell package with property~\ref{package-part-0}, along with that the centers of the package $C$ are 1-Lipschitz on every relevant twisted box (here ball) of the package, imply the following estimates:
\begin{equation}\label{center-lip-0}
|d_{i,j}(y) - d_{i',j'}(z)| \leq |y-z|, \ \ y \in D_{i}, \ z \in D_{i'},
\end{equation}
if $(i,j) \neq (i',j')$; and if $(i,j) = (i',j')$ and $y,z$ that lie in distinct balls of the package; and also if $y,z$ lie in a common ball
$$ \{ x_{1} \in K: \ rv(x_{1} - c_{i,1}) = \lambda_{1} \}, \ \ c_{i} = c_{i'}, \ \lambda_{1} \in R_{i}, $$
but $|y-z| \geq |\lambda_{2}|$ for some $\lambda_{2}$ such that $(\lambda_{1},\lambda_{2}) \in R_{i,j}$.

\vspace{1ex}

Hence the function $\tilde{f}$ will be 1-Lipschitz, if so are the functions $d_{i,j}$. And, furthermore, we can analyse each function $d_{i,j}$ separately on each relevant ball $B$ of the package; say, on the ball
$$ B := \{ x_{1} \in K: \ rv(x_{1} - c_{i,1}) = \lambda_{1} \} \ \ \text{for some} \ \lambda_{1} \in R_{1}. $$
We shall use the valuative Jacobian property (cf.~\cite[Lemma~2.8.5]{C-H-R}) applied to global functions extending the functions  $d_{i,j}$ by zero outside their natural domains. Hence there exists a finite 0-definable set $Z \subset K$, independent of a single ball $B$, such that the quotients
\begin{equation}\label{estimate}
   \frac{|d_{i,j}(y) - d_{i,j}(z)|}{|y_1-z_1|}, \ \ i=1,\ldots,q, \ j=1,\ldots,e,
\end{equation}
are constant for any two distinct points $y,z \in B$ that lie in a ball next to $Z$.

\vspace{1ex}

Further, since the centers $c_{j,2}(x_{1})$ are 1-Lipschitz on $B$ and $f$ is a risometry with respect to the variable $x_{2}$, it is easy to check that
\begin{equation}\label{center-lip}
|d_{i,j}(y) - d_{i,j}(z)| \leq |y-z|
\end{equation}
for any two points $y,z \in B$ such that $|y-z| \geq |\lambda_{2}|$ for some $\lambda_{2} \in G(K)$ with $(\lambda_{1},\lambda_{2}) \in R_{i,j}$.

\vspace{1ex}



Hence and by estimates~\ref{estimate}, the function $d_{i,j}$ is 1-Lipschitz on $B$ if there is some $\lambda_{2} \in G(K)$ such that $(\lambda_{1},\lambda_{2}) \in R_{i,j}$ and $|\lambda_{1}| > |\lambda_{2}|$.

\vspace{1ex}

In the other case, we get $|\lambda_{1}| = |\lambda_{2}|$ for all $\lambda_{2} \in G(K)$ such that $(\lambda_{1},\lambda_{2}) \in R_{i,j}$ by package property~\ref{package-part-0}.
Then the family of the images defining the function $d_{i,j}(x_{1})$ is constant, i.e.\ independent of $x_{1}$, because the center $c_{j,2}(x_{1})$ is 1-Lipschitz on $B$; and also constant is the function $d_{i,j}$ because of the canonical character of its construction and estimates~\ref{center-lip}.

\vspace{1ex}

In this fashion, we achieve 1-Lipschitz centers $\tilde{d}_{i,j}$, which together form a 0-definable family. For simplicity we drop tilde over $d_{i,j}$.

\vspace{1ex}

Now, put
$$ \eth C := \bigcup_{i,j} \, \mathrm{graph}\, (c_{i,j}). $$
Again by estimates~\ref{center-lip-0}, the function $\tilde{f} : C \cup \eth C \to K$ given by the formula
$$ \tilde{f}(x) = \left\{ \begin{array}{cl}
                        f(x) & \mbox{ if } \ x \in C, \\
                        d_{i,j}(x_{1}) & \mbox{ if } \ x \in \mathrm{graph}\, (c_{i,j}) \subset \eth C, \\
                        \end{array}
               \right.
$$
is a 0-definable 1-Lipschitz extension of $f$.   

\vspace{1ex}

Since we have already considered the case where the set $A$ is of dimension $<2$, there exists a 0-definable $\epsilon$-Lipschitz function
$$ g:K^2 \to K $$
that agrees with the function $\tilde{f}$ on the set $\eth C$.
In this manner, the problem can be reduced to the case where $f$ is an $\epsilon$-Lipschitz function which vanishes on the set $\eth C$.
Then, finally, since $C$ is an open cell package, the function
$$ F:K^{2} \to K $$
given by the formula
$$ F(x) = \left\{ \begin{array}{cl}
                        f(x) & \mbox{ if } \ x \in C \cup d(C), \\
                        0 & \mbox{ otherwise, } \
                        \end{array}
               \right.
$$
is a 0-definable $\epsilon$-Lipschitz extension of the function $f$ we are looking for.    \hspace*{\fill} $\Box$

\vspace{1ex}

The uniform version of the theorem follows by model theoretical compactness, because the foregoing constructions are canonical. This completes the proof of Theorem~\ref{ext-uni} for the affine plane.

\vspace{2ex}

\section{Lipschitz extension for higher dimensions}

To establish the extension theorem for higher dimension, we shall proceed with double induction on the dimension $n$ of the ambient space and the dimension $k$ of the subset $A$.
We briefly outline our strategy, which is similar, although more technical compared to the planar case. The canonical extension in the case $k=0$ can be constructed similarly as in the case of the affine plane.

\vspace{1ex}

For the case $k >0$, we shall consider 0-definable open cell packages $C \subset K^{k}$ of dimension $k$. $C$ is determined by a skeleton which consists of the following sequences of centers
$$ (c_{j,1}), \ j=1, \ldots, d_{1}, \ \ldots, \ (c_{j,k}(x_{1},\ldots,x_{k-1})), \ j =1, \ldots, d_{k}, $$
and an infinitary part
$$ R_{j_{1},\ldots,j_{k}} \subset G(K)^{k}, \ \ j_{i} =1, \ldots, d_{i}, \ i=1,\ldots,k, $$
which yield the presentation
$$ C = \bigcup_{j_{1},\ldots,j_{k}} \, C_{j_{1},\ldots,j_{k}} \ \ \text{with} \ \ C_{j_{1},\ldots,j_{k}} := $$
$$ \{ x \in K^{k}: \ (rv(x_{1} - c_{j,1}),\ldots, rv(x_{k} - c_{j,k}(x_{1},\ldots,x_{k-1}))) \in R_{j_{1},\ldots,j_{k}} \}. $$
Moreover, each center is 1-Lipschitz on every relevant twisted box determined by the above presentation.

\vspace{1ex}

Further, we shall need the following package property, being a higher dimensional analogue of property~\ref{package-part-0}:
\begin{equation}\label{package-part-1}
R_{j_{1},\ldots,j_{k}} \subset \{ \lambda \in G(K)^{k}: \ |\lambda_{1}| \geq |\lambda_{2}| \geq \ldots \geq |\lambda_{k}| \}
\end{equation}
for all $j_{1},\ldots,j_{k}$, which can be obtained by reordering the variables, as outlined below.

\vspace{1ex}

The reasoning here is similar, although more technical in comparison with the planar case. We can assume, using the induction hypothesis (uniform version) on the dimension $k$ of the set $A$, that the restrictions of each center
$$ c_{j_{1},\ldots,j_{k}}, \ \ j_{i}=1,\ldots,d_{i}, \  i=2,\ldots,k, $$
of the skeleton to every twisted box of the package contained in $K^{i-1}$ are global 1-Lipschitz functions. This will be illustrated by the following example. Suppose that after an appropriate partition we have obtained the case where
$$ R_{j_{1},j_{2},j_{3},j_{4}} \subset \{ \lambda \in G(K)^{4}: \ |\lambda_{1}| \geq |\lambda_{4}| > |\lambda_{2}| \geq |\lambda_{3}| \}. $$
Then we can replace the center $c_{j_{4},4}(x_{1},x_{2},x_{3})$ by a new one of the form
$$ c_{j_{4},4}(x_{1},c_{j_{2},2}(x_{1}), c_{j_{3},3}(x_{1},c_{j_{2},2}(x_{1}))), $$
which is independent of $x_{2},x_{3}$. With the variables $x_{1},x_{4},x_{2},x_{3}$ reordered in this way, the package is determined by sequences of centers of the form
$$ c_{j_{1},1}, \ c_{j_{4},4}(x_{1},c_{j_{2},2}(x_{1}), c_{j_{3},3}(x_{1},c_{j_{2},2}(x_{1}))), \ c_{j_{2},2}(x_{1}), \ c_{j_{3},3}(x_{1},x_{2}), $$
which satisfies Property~\ref{package-part-1}. This canonical procedure, performed for all sequences of centers, provides eventually a 0-definable package as output.

\vspace{1ex}

Now we can readily return to the proof of the extension theorem. As before, we shall separately treat the cases $0<k<n$ and $k=n$, starting from the latter for pedagogical reasons.

\vspace{1ex}

For $0<k<n$, we shall first proceed with induction on the dimension $n$ of the ambient space (applied here to dimensions $\leq k-1 < n$) to globally and uniformly extend some centers under study, in order to ensure Property~\ref{package-part-1} (cf.~Remark~\ref{reorder}); and next, with induction on the dimension $k$ of the set $A \subset K^{n}$ to globally extend the restriction of a given function $f$ on a subset of dimension $k-1$.

\vspace{1ex}

For $k=n$, we shall first proceed with induction on the dimension $n$ of the ambient space (applied here to dimensions $< n$) to globally and uniformly extend some centers under study, in order to ensure package property~\ref{package-part-1}. And next, using induction on the dimension $k$ of the set $A \subset K^{n}$, to globally extend the restriction of a given function $f$ on a subset of dimension $n-1$.

\vspace{1ex}

{\bf Case~I.} Suppose $A \subset K^{n}$ and $\dim A = n$. By the LE-property and induction hypothesis, the extension problem comes down, via throwing away pieces of lower dimension, to extending Lipschitz 0-definable functions from a 0-definable open cell package $A=C$ determined by a skeleton
$$ (c_{j,1}), \ j=1, \ldots, d_{1}, \ \ldots, \ (c_{j,n}(x_{1},\ldots,x_{n-1})), \ j =1, \ldots, d_{n}, $$
and an infinitary part
$$ R_{j_{1},\ldots,j_{n}} \subset G(K)^{n}, \ \ j_{i} =1, \ldots, d_{i}, \ i=1,\ldots,n. $$
As in the planar case, our objective is to extend $f$ on the graphs of the centers $c_{1,n},\ldots,c_{j_{n},n}$ using package property~\ref{package-part-1}.
Again, we can assume without loss of generality that $f$ satisfies condition~\ref{risometry} with respect to the variable $x_{n}$. Hence and by Corollary~\ref{r2}, the images
\begin{equation}\label{center-family-1}
 f(C_{j_{1},\ldots,j_{n}} \cap (\{ (x_{1},\ldots,x_{n-1}) \} \times K)),
\end{equation}
$(x_{1},\ldots,x_{n-1}) \in \pi_{<n}(D_{j_{1},\ldots,j_{n}})$, form a 0-definable family of cells with some 0-definable family of centers
$$ d_{j_{1},\ldots,j_{n}}(x_{1},\ldots,x_{n-1}). $$
Set
$$ \eth C := \bigcup_{j_{1},\ldots,j_{n}} \, \mathrm{graph}\, (c_{j_{1},\ldots,j_{n}}), $$
and extend $f$ to the function  $\tilde{f} : C \cup \eth C \to K$ by putting
$$ \tilde{f}(x) = \left\{ \begin{array}{cl}
                        f(x) & \mbox{ if } \ x \in C, \\
                        d_{j_{1},\ldots,j_{n}}(x_{1},\ldots,x_{n-1}) & \mbox{ if } \ x \in \mathrm{graph}\, (c_{j_{1},\ldots,j_{n}}) \subset \eth C. \\
                        \end{array}
               \right.
$$
The very definition of an open cell package with property~\ref{package-part-1}, along with that the centers of the package $C$ are 1-Lipschitz on every relevant twisted box of the package, imply the following estimates:
\begin{equation}\label{center-lip-1}
|d_{j_{1},\ldots,j_{n}}(y) - d_{j_{1},\ldots,j_{n}}(z)| \leq |y-z|
\end{equation}
for any $y,z \in K^{n-1}$ that lie in distinct twisted boxes in $K^{n-1}$ of the package, or lie in a common twisted box determined by the center $c_{j_{1},\ldots,j_{n-1}}$ and a value
$$ \lambda' \in \pi_{<n}(R_{j_{1},\ldots,j_{n}}), $$
but $|y-z| \geq |\lambda_{n}|$ for some $\lambda_{n}$ such that $(\lambda',\lambda_{n}) \in R_{j_{1},\ldots,j_{n}}$.

\vspace{1ex}

Hence the function $\tilde{f}$ will be 1-Lipschitz, if so are the functions $d_{j_{1},\ldots,j_{n}}$. And further, we can analyse each function $d_{j_{1},\ldots,j_{n}}$ separately on each relevant twisted box of the package. (We argued likewise in the planar case.)

\vspace{1ex}

Since each twisted box is (uniformly) 1-bi-Lipschitz with the associated box (with zero centers), one can limit oneself to only analyse the function $d := d_{j_{1},\ldots,j_{n}}$ defined on a single box
$$ B := \{ (x_{1},\ldots,x_{n-1}) \in K^{n-1}: \ (rv(x_{1}),\ldots,rv(x_{n-1})) = \lambda' \} $$
with $\lambda' = (\lambda_{1},\ldots,\lambda_{n-1}) \in G(K)^{n-1}$. As before (cf.~estimates~\ref{estimate}), we shall use the claim below, which is a parametric version of the valuative Jacobian property applied to global functions extending the functions  $d_{j_{1},\ldots,j_{n}}$ by zero outside their natural domains.

\vspace{1ex}

We still need some notation:
$$ x^{(i)} = (x_{1},\ldots,x_{i-1},x_{i+1},\ldots,x_{n-1})  $$
and
$$ d^{(a^{i})}(x_{i}) := d(a_{1},\ldots,a_{i-1},x_{i},a_{i+1},\ldots,a_{n-1}) $$
for $i=1,\dots,n-1$.


\begin{claim}
There exist nowhere-dense 0-definable subsets
$$ Z_{i} \subset K^{n-1}, \ \ i=1,\ldots,n-1, $$
independent of a single twisted box $B$, such that the projections $\pi_{\neq i}|Z_{i}$ have finite fibres, and for each point $a^{(i)} \in \pi_{\neq i}(B)$ the quotients
\begin{equation}\label{estimate-n}
   \frac{|d^{a^{(i)}}(y) - d^{a^{(i)}}(z)|}{|y-z|}
\end{equation}
are constant for any two distinct points $y,z \in K_{x_{i}}$ that lie in a ball in the affine space  $\{ a^{(i)} \} \times K_{x_{i}}$ next to $Z_{i} \cap (\{ a^{(i)} \} \times K_{x_{i}})$.
\end{claim}

This follows immediately from the ordinary Jacobian property by model theoretical compactness.    \hspace*{\fill} $\Box$

\vspace{2ex}

Further, as in the planar case, since the centers $c_{j_{1},\ldots,j_{n}}$ are 1-Lipschitz on $B$ and $f$ is a risometry with respect to the variable $x_{n}$, we easily obtain the estimates
\begin{equation}\label{center-lip-n}
|d_{j_{1},\ldots,j_{n}}(y) - d_{j_{1},\ldots,j_{n}}(z)| \leq |y-z|
\end{equation}
for any two points $y,z \in B$ such that $|y-z| \geq |\lambda_{n}|$ for some $\lambda_{n} \in G(K)$ with $(\lambda',\lambda_{n}) \in R_{j_{1},\ldots,j_{n}}$.


\vspace{1ex}

Hence and by estimates~\ref{estimate-n}, each function $d_{j_{1},\ldots,j_{n}}$ is 1-Lipschitz on $B$ if
$$ |\lambda_{1}| \geq \ldots \geq |\lambda_{n-1}|  > \lambda_{n} $$
for some $\lambda_{n} \in G(K)$ such that $(\lambda',\lambda_{n}) \in R_{j_{1},\ldots,j_{n}}$.

\vspace{1ex}

In the other case, it follows from package property~\ref{package-part-1} that
$$ |\lambda_{1}| \geq \ldots \geq  |\lambda_{p}| > |\lambda_{p+1}| = \dots = |\lambda_{n-1}| = |\lambda_{n}| $$
for all $\lambda_{n} \in G(K)$ such that $(\lambda',\lambda_{n}) \in R_{i,j}$ .
Then the family of the images defining the function $d_{j_{1},\ldots,j_{n}}(x_{1},\ldots,x_{n-1})$ is independent of the variables $x_{p+1},\ldots,x_{n-1}$, because the center
$c_{j_{n},n}(x_{1},\ldots,x_{n-1})$ is 1-Lipschitz on $B$; and so is the function $d_{j_{1},\ldots,j_{n}}(x_{1},\ldots,x_{p})$ because of the canonical character of its construction and estimates~\ref{center-lip-n}.

\vspace{1ex}

In this fashion, we achieve centers $\tilde{d}_{j_{1},\ldots,j_{n}}$, which together form a 0-definable family. For simplicity, we drop tilde over $d_{j_{1},\ldots,j_{n}}$.

\vspace{1ex}

Now, put
$$ \eth C := \bigcup_{i,j} \, \mathrm{graph}\, (c_{i,j}). $$
Again by estimates~\ref{center-lip-1}, the function $\tilde{f} : C \cup \eth C \to K$ given by the formula
$$ \tilde{f}(x) = \left\{ \begin{array}{cl}
                        f(x) & \mbox{ if } \ x \in C, \\
                        d_{j_{1},\ldots,j_{n}}(x_{1}) & \mbox{ if } \ x \in \mathrm{graph}\, (c_{j_{1},\ldots,j_{n}}) \subset \eth C, \\
                        \end{array}
               \right.
$$
is a 0-definable 1-Lipschitz extension of $f$. For simplicity, we drop the tilde over $f$.

\vspace{1ex}

By the induction hypothesis, the extension theorem holds for the sets $A$ of dimension $<n$. Hence there exists a 0-definable $\epsilon$-Lipschitz function
$$ g:K^n \to K $$
that agrees with the function $f$ on the set $\eth C$.
In this manner, the problem can be reduced to the case where $f$ is an $\epsilon$-Lipschitz function which vanishes on the set $\eth C$.
Then, finally, since $C$ is an open cell package with package property~\ref{package-part-0}, the function
$$ F:K^{n} \to K $$
given by the formula
$$ F(x) = \left\{ \begin{array}{cl}
                        f(x) & \mbox{ if } \ x \in C \cup d(C), \\
                        0 & \mbox{ otherwise, } \
                        \end{array}
               \right.
$$
is a 0-definable $\epsilon$-Lipschitz extension of the function $f$ we are looking for.    \hspace*{\fill} $\Box$

\vspace{1ex}

\begin{remark}\label{simultaneous}
The foregoing extension procedure is one performed on the line $K_{x_{n}}$, with respect to parameters $(x_{1},\ldots,x_{n-1})$ from $K^{n-1}$, and controlled by the valuative Jacobian property with parameter.
\end{remark}



\vspace{2ex}

{\bf Case~II.} Fix $k \in \{1,\ldots,n-1 \}$ and assume that the extension theorem holds both in the affine spaces of dimensions $<n$, and for subsets $A$ of $K^n$ of dimension $<k$.

\vspace{1ex}

As in the planar case, we can assume without loss of generality, using Proposition~\ref{graph} and the LE-property, that $f: A \to K$ is a 0-definable 1-Lipschitz function, and $A$ is a 0-definable package which consists of finitely many graphs $\mathrm{graph}\, \phi_{j}(x_{1},\ldots,x_{k})$ of maps
$$ \phi_{j} = (\phi_{j,1}, \ldots, \phi_{j,n-k}): D \to K^{n-k}, \ \ j=1,\ldots,d, $$
defined on a 0-definable open cell package $D$ in the affine space $K^{k}$, with package property~\ref{package-part-0}, such that every map $\phi_{j}$ is 1-Lipschitz on each twisted box of $D$.

\vspace{1ex}

For the package $D$, we keep the notation from the beginning of this section. We are going to repeat the extension procedure for the plane from Section~5, Case~II, arranged now for a version with parameters $(x_{1},\ldots,x_{k-1})$ from $K^{k-1}$. We shall thus extend $f$ on a subset
$$ \mathfrak{N} \subset \eth D \times K^{n-k} $$ with finite fibers over $\eth D$, which gives control over further 1-Lipschitz extension of the function $f$; here
$$ \eth D := \bigcup_{j=1}^{d_{k}} \, \mathrm{graph}\, (c_{j,k}). $$
However, this situation is more difficult because we are now dealing with the graphs of maps with target $K^{n-k}$. We describe the procedure for one fixed parameter
$$ x' =(x_{1},\ldots,x_{k-1}) \in \pi_{<k}(D). $$
Denote by $D_{x'}$ and $C_{x'}$ the packages in the line $K_{x_{k}}$ and in $K_{x_{k}} \times K^{n-k}$ induced by the package $D$, and by $\eth D_{x'}$ being the part of $\eth D$ lying over $x'$.

\vspace{1ex}

As explained in Section~1 after Theorem~\ref{ext}, we can require that $f$ satisfy condition~\ref{risometry} with respect to the variable $x_{k}$; more precisely, the function $f(x_{1}, \dots, x_{k},\phi_{j}(x_{1}, \dots, x_{k}))$ be a risometry with respect to the variable $x_{k}$, $j=1,\ldots,d$.


\vspace{1ex}

The unions of images
$$ \bigcup_{j=1}^{d} \, \phi_{j,l}(D_{x'}) \subset K_{x_{l}}, \ \ l=k+1,\ldots,n, $$
are some 0-definable open cell packages $E_{x',l}$, with skeletons
$$ S(E_{x',l}) = \{ d_{l,1}, \dots, d_{l,t_{l}} \} \subset K_{x_l}, $$  
and of some finite sets $T_{x',l} = \{ d_{l,t+1},\ldots,d_{l,u} \} \subset K_{x_l}$.  

\vspace{1ex}

Further, the image $f(C_{x'})$ is the union of a 0-definable open cell package $H_{x'}$ with a skeleton $S(H_{x'}) = \{ a_1, \dots, a_p \}$ and of a finite number of points.

\vspace{1ex}

For any point $(x', c_{i,k}(x')) \in \eth D$, the family $D_{x',i}$ of those balls of $D_{x'}$ that are closer to $(x', c_{i,k}(x'))$ than to any other points of $(x', c_{j,k}(x')$, $j \neq i$, is well defined by the very definition of a package. Define the packages $E_{x',i,l}$ for $D_{x',i}$ as the packages $E_{x',l}$ for $D_{x'}$, $l=k+1,\ldots,n$. Their skeletons $S(E_{x',i,l})$ are subsets of $S(E_{x',l})$.
Put
$$ N_{x'} := \eth D_{x'} \times \prod_{l=k+1}^{n} \, (S(E_{x',l}) \cup T_{x',l}) \subset K^{n}. $$
An $n$-tuple $(x', c_{i,k}(x'),d) \in N_{x'}$ is relevant if there exist balls $B_{l}$ next to $d_{l}$ from the packages $E_{x',i,l}$, respectively, such that one of the pre-images
$$ \phi_{j}^{-1}(B_{l} \times \ldots \times B_{n}), \ \ j=1,\ldots,d, $$
meets the package $D_{x',i}$; say meets a ball $B'$ from $D_{x',i}$. Denote by $\mathfrak{N}_{x'}$ the set of those relevant points.

\vspace{1ex}

Now we wish to extend $f$ on the set $\mathfrak{N}_{x'}$. Given a point
$$ (x', c_{i,k}(x'),d) \in \mathfrak{N}_{x'}, $$
take balls $B_{l}$, $l=k+1,\ldots,n$, and $B'$ as above. Since the centers under study are skeletons and $f$ is a risometry with respect to $x_{k}$, the image
$$ \tilde{f}(\mathrm{graph}\, \phi_{j}|B') $$
is next to a unique center from the skeleton $S(H_{x'})$, say $a_{q}$. It is not difficult to check that $a_{k}$ is independent of the choice of $B'$. Then we put
$$ f(x', c_{i,k}(x'),d) := a_{q}. $$

In this manner, we obtain a 0-definable extension $\tilde{f}$ of $f$ on the set $\mathfrak{N}$. We argued likewise in the planar case (Section~5, Case~II) with the function $f$ defined on the graphs of a finite 0-definable family of functions $\phi_{j}$, but without parameters $x'$.

\vspace{1ex}

The function $\tilde{f}$ is 1-Lipschitz because $D$ is an open cell package with package property~\ref{package-part-0}, $H_{x'}$, $x' \in \eth D$, are open cell packages, the maps $\phi_{j}$ are 1-Lipschitz and $f$ is a risometry with respect to the variable $x_{k}$. These properties make it possible to control the distance between given two points of the domain versus the distance between other points and sets involved in the construction. We leave the details for the reader. For simplicity, we drop the tilde over $f$.

\vspace{1ex}

By the induction hypothesis, the extension theorem holds for the sets $A$ of dimension $<k$. Hence there exists a 0-definable $\epsilon$-Lipschitz function
$$ g:K^n \to K $$
that agrees with the function $f$ on the set $\mathfrak{N}$.
In this manner, the problem can be reduced to the case where $f$ is an $\epsilon$-Lipschitz function which vanishes on the set $\mathfrak{N}$.
Then, finally, since $D$ is an open cell package with package property~\ref{package-part-0}, the function
$$ F:K^{n} \to K $$
given by the formula
$$ F(x) = \left\{ \begin{array}{cl}
                        f(x) & \mbox{ if } \ x \in A \cup \mathfrak{N}, \\
                        0 & \mbox{ otherwise, } \
                        \end{array}
               \right.
$$
is the desired 0-definable $\epsilon$-Lipschitz extension of the function $f$.    \hspace*{\fill} $\Box$

\vspace{1ex}

As before, the uniform version of the theorem follows by model theoretical compactness, which completes the proof of Theorem~\ref{ext-uni}.

\vspace{1ex}

We conclude with two comments. Firstly, in view of Example~\ref{ex-ret}, the still open problem of the existence of definable Lipschitz retractions on definable closed subsets of an affine space, with an arbitrarily small Lipschitz constant $>1$, is plausible only for rank one valued fields. Under this assumption, the technique of Lipschitz definable packages with skeletons, introduced in this paper, will allow us to establish both Lipschitz extension with an arbitrarily small magnification of the Lipschitz constant, and the existence of Lipschitz retraction with Lipschitz constant arbitrarily close to 1. This will be done via simultaneous induction with respect to dimension in our forthcoming article.

\vspace{1ex}

Secondly, our recent article~\cite{Now-Apal} investigates geometry and  topology of Hensel minimal structures with a natural condition imposed on the auxiliary sort $RV$, covering many classical non-Archimedean structures such as, for instance, valued fields with analytic structure. Among the main results achieved there are the existence of the limit, the closedness theorem, non-Archimedean versions of the \L{}ojasiewicz inequality, an embedding theorem for regular definable spaces, and the definable ultranormality and ultraparacompactness of definable Hausdorff LC-spaces.
Some of those results for Henselian fields, without and with analytic structure, were provided in our previous articles~\cite{Now-Sel,Now-Sing,Now-Alant}.
Theorems on extending continuous definable functions and on definable retractions on closed subsets of an affine space are given in our paper~\cite{Now-closed}.


\vspace{2ex}

\vspace{2ex}

\begin{small}
Institute of Mathematics

Faculty of Mathematics and Computer Science

Jagiellonian University

ul.~Profesora S.\ \L{}ojasiewicza 6,

30-348 Krak\'{o}w, Poland

{\em E-mail address: nowak@im.uj.edu.pl}
\end{small}

\end{document}